\theoremstyle{plain}
\newtheorem{theorem}{Theorem}[section]
\newtheorem{lemma}[theorem]{Lemma}
\newtheorem{proposition}[theorem]{Proposition}
\newtheorem{corollary}[theorem]{Corollary}
\newtheorem{notation}[theorem]{Notation}
\theoremstyle{definition}
\newtheorem{definition}[theorem]{Definition}
\newtheorem{remark}[theorem]{Remark}
\newtheorem{example}[theorem]{Example}
\newcommand{\N}{{\mathbb N}}     
\newcommand{\Z}{{\mathbb Z}}     
\newcommand{\CP}{{\textit{CP}}}
\newcommand{\pol}{{\textit{Pol}}}
\newcommand{\kernel}{\textit{Ker}}
\newcommand{\suc}{\textit{Suc}}
\journal{Algebra Universalis}
\begin{document}

\begin{frontmatter}

\title{Congruence Preserving Functions on   Free Monoids}

\author{Patrick C\'egielski\fnref{LACL,TARMAC}}
\ead{cegielski@u-pec.fr}

\author{Serge Grigorieff\fnref{IRIF,TARMAC}}
\ead{seg@liafa.univ-paris-diderot.fr}

\author{Ir\`ene  Guessarian%
\fnref{IRIF,emeritus,TARMAC}\corref{correspondingauthor}}
\ead{ig@liafa.univ-paris-diderot.fr}

\cortext[correspondingauthor]{Corresponding author}

\fntext[LACL]{LACL, EA 4219, Universit\'e Paris-Est Cr\'eteil,  
IUT S\'enart-Fontainebleau}

\fntext[IRIF]{IRIF, UMR 8243, CNRS \& Universit\'e Paris 7 Denis Diderot}

\fntext[emeritus]{Emeritus at UPMC Universit\'e Paris 6.}

\fntext[TARMAC]{This work was partially supported by 
TARMAC ANR agreement 12 BS02 007 01.}

\begin{abstract}
A function on an algebra  is congruence preserving
if, for any congruence, it maps congruent elements to congruent elements.
We show that, on a  free monoid generated by at least 3  letters,
a function from the free monoid into itself is  congruence preserving  
 if and only if it is   of the form $x \mapsto w_0 x w_1 \cdots w_{n-1} x w_n$
for some finite sequence of words $w_0,\ldots,w_n$. 
We generalize this result to functions of arbitrary arity.
This shows that a free monoid with at least three generators is a (noncommutative)
affine complete algebra. Up to our knowledge, 
it is the first (nontrivial) case of a noncommutative 
affine complete algebra.

\end{abstract}

\begin{keyword}
Congruence Preservation \sep Free Monoid \sep Affine Completeness
\MSC[2010] 08A30 \sep 08B20
\end{keyword}

\end{frontmatter}

{\scriptsize\tableofcontents}

\section{Introduction}
%
%
We here focus on functions  which are  congruence preserving  on free monoids generated by at least 3  letters.
Given an algebra $\+A=\langle A,\Omega\rangle$ 
(where $\Omega$ is a family of operations on the set $A$),
a function $f:A^k\to A$ is said to be {\em congruence preserving} 
if for every congruence $\sim$
on $\langle A,\Omega\rangle$, and for every 
$x_1,\ldots,x_k,y_1,\ldots,y_k \in A$, 
$x_1\sim y_1$,\ldots, $x_k\sim y_k$ implies $f(x_1,\ldots,x_k)\sim f(y_1,\ldots,y_k)$.
Such functions were introduced in Gr\"atzer  \cite{{gratzerUnivAlg}},  where they are said to have the ``substitution property".

 Let $O(\+A)$ be the family of all operations (of any arity) on $A$.
A {\it clone}  on $A$ is a subfamily of $O(\+A)$ containing all projections 
and closed under composition.
An important problem is to compare two clones
associated to an algebra $\langle A,\Omega\rangle$,
namely,
\begin{itemize}
\item
the smallest clone $\pol(\+A)$ which contains $\Omega$ and all constant functions
(the so-called ``polynomial functions'' by reference to the case of rings),
\item
the clone $\CP(\+A)$ of congruence preserving functions on $A$.
\end{itemize}

Obviously, we have $\pol(\+A) \subseteq \CP(\+A) \subseteq O(A)$. 
Are these inclusions strict?

In 1921 Kempner \cite{kempner1921} showed that $\pol(\+A)=O(A)$ holds for the ring $\Z/n\Z$ 
 if and only if $n$ is prime. 
More recently,  since \cite{gratzer1962}, the main concern is whether all congruence preserving functions are polynomial, i.e., 
$\pol(\+A)\stackrel{\text{?}}{=}\CP(\+A)$.
Algebras where all congruence preserving functions are polynomial are called {\em affine complete} in the terminology introduced by Werner \cite{werner1971}. They are extensively studied in the book  by Kaarli \& Pixley \cite{KaarliPixley}.

Our main results  (Theorems \ref{Carac_par_Poly} and \ref{t:main})  prove that  if $\Sigma$ has at least three elements then the free monoid $\Sigma^*$ generated by $\Sigma$  is affine complete.  Up to our knowledge, 
our result provides the first (nontrivial) case of a noncommutative 
affine complete algebra.

In the commutative case, quite a few algebras have been shown to be affine complete:
Boolean algebras (Gr\" atzer, 1962 \cite{gratzer1962}),
$p$-rings with unit (Iskander, 1972 \cite{iskander1972}),
vector spaces of dimension at least $2$ 
(Heinrich Werner, 1971 \cite{werner1971}),
free modules with more than one free generator
(N\"obauer, 1978 \cite{nobauer1978})
hence also abelian groups.
Gr\" atzer \cite{gratzer1964},  1964, determined which distributive lattices are
affine complete.
Bhargava \cite{bhargava97}, 1997,   proved that the ring $\Z/n\Z$ is affine complete 
if and only if neither $8$ nor any $p^2$ with $p$ prime divides $n$.
When $\pol(\+A)$ is a strict subfamily of $\CP(\+A)$, a natural question is
to describe the family $\CP(\+A)$.
For distributive lattices, this is done in Haviar \& Plo\v{s}\v{c}ica  \cite{plos}, 2008.

%
%
Even for such a simple arithmetical algebra as $\+A=\langle \N,\suc\rangle$
(where $\suc$ is the successor function),
the description of $\CP(\+A)$ involves nontrivial number theory.
Indeed, for the algebra $\langle \N,\suc\rangle$
a function $f:\N\to\N$ is congruence preserving if and only if 
$f(x)\geq x$ and $f$ has the following 
property:
$x-y$ divides $f(x)-f(y)$ for all $x,y\in\N$.
In~\cite{cggIJNT} we proved that this property holds 
 if and only if
\\\centerline{$f(x)=\sum_{k\in\N} a_k \dbinom{x}{k} 
= a_0+a_1 x + a_2 \dfrac{x(x-1)}{2!}+ a_3 \dfrac{x(x-1)(x-2)}{3!}+\cdots$}
where $a_k$ is divided by $\ell$ for all $2\leq \ell \leq k$.
This result also applies to the expansions of $\langle \N,\suc\rangle$
having the same congruences, e.g., 
one can expand $\langle \N,\suc\rangle$ with $+$ and $\times$.
In~\cite{cggJosef}  
we gave a similar characterization of congruence preserving functions on the algebra
$\langle \Z,+,\times\rangle$.

To give a flavor of the nontrivial character of congruence preserving functions,
let us recall some examples given in our papers \cite{cggIJNT,cggJosef}
of congruence preserving functions $f : \N\to\N$
$$
\begin{array}{ll}
f(x)\ =\ \texttt{if $x=0$ then $1$ else $\lfloor e x!\rfloor$}
&\text{($e=2,718\ldots$ is the Euler number)}
\\
f(x)\ =\ \lfloor e^{1/a} a^x   x!\rfloor
&\text{for $a\in\N\setminus\{0,1\}$}
\\
\multicolumn{2}{l}{f(x)\ =\ 
\texttt{if $x\in2\N$ then $\lfloor\cosh(1/2)\;2^x\;x!\rfloor$ 
                             else $\lfloor\sinh(1/2)\;2^x\;x!\rfloor$}}
\end{array}
$$
and of a Bessel like congruence preserving function $f : \Z\to\Z$
$$f(x)\ =\ \texttt{if $x\geq0$ then  $\dfrac{\Gamma(1/2)}{2\times4^x\times x!}
\displaystyle\int_1^\infty e^{-t/2}(t^2-1)^x dt$
else $-f(-x)$ .}
$$
It might seem counter-intuitive that, when $\Sigma^*$ has many generators, the congruence preserving functions are  fewer and much simpler than when $\Sigma^*$ has a unique generator: this  stems from the fact that, when $\Sigma^*$ has a unique generator, $\Sigma^*$  is isomorphic to $\N$ which has very few congruences, and hence a lot of functions can preserve these few congruences. 

\medskip
%
%
After recalling basic definitions in Section 2,
we prove in Section 3 that, if $\Sigma$ has at least three elements 
then the only congruence preserving functions from the free monoid $\Sigma^*$
into itself are those defined by terms with parameters, namely functions of the form
$x\mapsto w_0 x w_1 x w_2\cdots x w_n$ (Theorem \ref{Carac_par_Poly}). 
In Section 4, we extend this result by characterizing  congruence preserving functions of arbitray arity $k\in\N$ (Theorem \ref{t:main}) as the functions defined by  terms with parameters, i.e.,  functions of the form $(x_1,\ldots,x_k)\mapsto w_0x_{i_1}^{p_1}w_1x_{i_2}^{p_2}w_2\cdots x_{i_n}^{p_n}w_n$ with $x_{i_j}\in\{x_1,\ldots,x_k\}$ for $j=1,\ldots,n$. A shorter proof of Theorem \ref{t:main} is given in Section 5 when $\Sigma$ is infinite.

\section{Classical definitions and notations}\label{s:fix}
Recall the 
 notion of congruence on an algebra. 
\begin{definition}\label{def:congA}
A congruence $\sim$ on an algebra $\langle\+A,\Omega\rangle$ is an equivalence relation
on $\+A$ such that, for every operation $\xi\colon\+A^k\to\+A$ of $\Omega$, 
for all $x_1,\ldots,x_k, y_1,\ldots,y_k\in\+A $
\begin{equation*}
x_i\sim y_i \ \text  {for } i=1,\ldots,k \qquad
\Longrightarrow \qquad\xi(x_1,\ldots,x_k)\sim \xi(y_1,\ldots,y_k)
\end{equation*}
\end{definition}

\begin{definition}\label{def:cp0}
Let $\+A=\langle A,\Omega\rangle$ be an algebra  and $\sim$ a congruence on $\+A$.
A function $f : A^k \to A$ is said to {\em preserve the congruence $\sim$}  if for all $x_1,\ldots,x_{k}, y_1,\ldots,y_{k}$ in $A$,
\begin{equation*}
x_i\sim y_i \ \text  {for } i=1,\ldots,k
\quad \Longrightarrow\quad 
f(x_1,\ldots,x_{k}) \sim  f(y_1,\ldots,y_{k}) .
\end{equation*}
\end{definition}

\begin{definition}\label{def:cp}
Let $\+A=\langle A,\Omega\rangle$ be an algebra.
A function $f : A^k \to A$ is {\em congruence preserving} (abbreviated into CP) if it preserves all  congruences on $\+A$.
\end{definition}

\begin{remark}
1) A function $f$ is congruence preserving if and only if
every congruence 
on the algebra $\langle A,\Omega\rangle$
is also a congruence on the expanded algebra $\langle A,\Omega\cup\{f\}\rangle$.

2) Gr\"atzer's denomination for congruence preservation is
``{\em $f$ enjoys the substitution property}'', 
cf. 
 \cite{gratzerUnivAlg} (page 44, Chap. I \S8 below Lemma 9).
Some authors also use the denomination ``{\em $f$ is congruence compatible}''.
\end{remark}
\begin{definition} Let $\Sigma$ be an nonempty set. The {\em free monoid}   $\Sigma^*$ generated by $\Sigma$ is the  monoid $\langle\Sigma^*, \cdot\rangle$ 
\\
--  whose elements are all the finite sequences (or words) of  elements from $\Sigma$, \\
-- with  the concatenation operation: $x_1\ldots x_n\cdot  y_1\ldots y_p= x_1\ldots x_n y_1\ldots y_p$,   \\
-- whose unit element is the empty word denoted by $\varepsilon$. 
\end{definition}
For $x\in\Sigma^*$, and $n\in\N$, the word obtained by concatenating $x$ with itself $n$ times is denoted by $x^n$.

\begin{remark} The notions of CP function and morphism are different: 

(i)  $x\mapsto x^2$ is CP but is not a morphism.

(ii) Let $\Sigma=\{a,b\}$, $\varphi$ defined by $\varphi\colon a\mapsto a$, $\varphi\colon b\mapsto a$,  is a morphism but it is not CP. Indeed,  let $\sim_a$ be the congruence on $\Sigma^*$ defined by $x\sim_a y$ if and only if  $x$ and $y$ have the same number of  occurrences of $a$. 
Then $a\sim_a ab$ but $\varphi(a)\not\sim_a \varphi(ab)$.
\end{remark}

\section{Unary congruence preserving functions on  free monoids with at least three generators}
\label{s:cp Astar}
%

In an algebra any term (possibly involving elements from the algebra)
defines a congruence preserving function. We detail the case of  $\Sigma^*$  in  Lemma \ref{poly=>CP}.
\begin{lemma} \label{poly=>CP}All  unary 
functions $\Sigma^*\to \Sigma^*$ of the form $x\mapsto p(x)= w_0xw_1xw_2\ldots xw_n$,  with
$w_0,\ldots,w_n\in\Sigma^*$, are CP.
\end{lemma}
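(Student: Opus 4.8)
The plan is to show directly from the definition of congruence preservation that a function of the form $p(x) = w_0 x w_1 x w_2 \cdots x w_n$ preserves every congruence on $\Sigma^*$. So fix an arbitrary congruence $\sim$ on $\Sigma^*$ and elements $x, y \in \Sigma^*$ with $x \sim y$; I must prove $p(x) \sim p(y)$. The key observation is that the building blocks of $p$ are exactly the monoid operation (concatenation) and the fixed constant words $w_0, \ldots, w_n$, both of which interact well with any congruence.

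First I would note that $\sim$ is reflexive, so $w_i \sim w_i$ for each $i$. Combined with the hypothesis $x \sim y$, I have a list of congruent pairs: $w_0 \sim w_0$, $x \sim y$, $w_1 \sim w_1$, $x \sim y$, \ldots, $x \sim y$, $w_n \sim w_n$. Next I would apply the defining property of a congruence (Definition \ref{def:congA}) to the concatenation operation: since $\cdot$ is the (binary, hence by associativity effectively $(2n+1)$-ary) operation of the monoid $\langle \Sigma^*, \cdot \rangle$, feeding in these congruent pairs yields
\[
w_0 \cdot x \cdot w_1 \cdot x \cdots x \cdot w_n \ \sim\ w_0 \cdot y \cdot w_1 \cdot y \cdots y \cdot w_n,
\]
that is, $p(x) \sim p(y)$. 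Strictly speaking concatenation is a binary operation, so to be careful I would instead argue by induction on $n$ (or on the number of concatenations): the base case $p(x) = w_0$ is constant and trivially $p(x) = p(y)$; for the inductive step I write $p(x) = q(x) \cdot x \cdot w_n$ where $q$ has fewer occurrences of $x$, observe $q(x) \sim q(y)$ by the induction hypothesis and $x \sim y$, $w_n \sim w_n$ by assumption and reflexivity, and apply the congruence property of the single binary operation $\cdot$ twice to conclude $p(x) = q(x) \cdot x \cdot w_n \sim q(y) \cdot y \cdot w_n = p(y)$.

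Since $\sim$ was an arbitrary congruence, this shows $p$ preserves all congruences, i.e., $p$ is CP, as required. There is essentially no obstacle here: the statement is the ``easy direction'' asserting that term functions with parameters are congruence preserving, and it follows immediately from the definition of congruence once one is careful about reducing the $(2n+1)$-fold concatenation to iterated applications of the binary operation. The only thing to be mildly careful about is the degenerate cases (some $w_i = \varepsilon$, or $n = 0$), all of which are absorbed by the induction.
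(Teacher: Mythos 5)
Your proof is correct, but it takes a different route from the paper's. You argue directly from the definition of a congruence: $\sim$ is compatible with the binary concatenation operation, so by reflexivity on the parameter words $w_i$ and an induction on $n$ (applying compatibility of $\cdot$ twice per step) you get $p(x)\sim p(y)$. The paper instead invokes the fact that every congruence on $\Sigma^*$ is the kernel of a monoid morphism $\varphi$ (cf.\ Proposition~\ref{p:congruences}, via the quotient monoid), applies $\varphi$ to $p(x)$ and $p(y)$, and uses the morphism property to see the two images coincide. Your argument is more elementary in that it needs nothing beyond Definition~\ref{def:congA} and is the standard ``term functions are compatible with congruences'' induction, valid verbatim in any algebra; the paper's kernel argument is shorter to write for monoids and foreshadows the restricted-congruence (RCP) machinery used throughout the rest of the paper, where congruences are systematically handled through morphisms $\Sigma^*\to\Sigma^*$. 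Your care in reducing the $(2n+1)$-fold concatenation to iterated applications of the binary operation is exactly the right point to be explicit about, and the degenerate cases are indeed absorbed by the induction.
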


\begin{proof} Any congruence $\sim$ on $\Sigma^*$ is the kernel of some morphism $\varphi$  from $\Sigma^*$ into some monoid, 
i.e.,  $x\sim y$ if and only if  $\varphi(x)=\varphi(y)$. 
Let $\varphi$ be a morphism, and $\sim$ the associated congruence. Assume $\varphi(x)=\varphi(y)$, then 
\begin{eqnarray*}
\varphi(p(x))=\varphi(w_0xw_1xw_2\ldots xw_n)&=&\varphi(w_0)\varphi(x)\varphi(w_1)\varphi(x)\varphi(w_2)\ldots \varphi(x)\varphi(w_n)\\
&=&\varphi(w_0)\varphi(y)\varphi(w_1)\varphi(y)\varphi(w_2)\ldots \varphi(y)\varphi(w_n)\\
&=&\varphi(w_0yw_1yw_2\ldots yw_n)=\varphi(p(y))
\end{eqnarray*}
 hence 
$p(x)\sim p(y)$ and $p$ is CP.
\end{proof}
It turns out that the converse is true for CP functions $\Sigma^*\to\Sigma^*$ when $\Sigma$ has at least three letters.

\begin{theorem}\label{Carac_par_Poly}
Assume $|\Sigma|\geq 3$. 
A function $f\colon \Sigma^*\to\Sigma^*$ is CP if and only if it  is of the form $f(x)=w_0xw_1x\cdots w_{n-1}xw_n$, for some $w_0,\ldots, w_n\in\Sigma^*$.
\end{theorem}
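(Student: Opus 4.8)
The plan is to prove the nontrivial direction (CP $\Rightarrow$ term-with-parameters) by extracting enough information about an arbitrary CP function $f$ from carefully chosen congruences, and then assembling that information into the claimed normal form. The easy direction is Lemma~\ref{poly=>CP}. So assume $f\colon\Sigma^*\to\Sigma^*$ is CP with $|\Sigma|\ge 3$; fix three distinct letters $a,b,c\in\Sigma$.

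\textbf{Step 1: length and a first structural constraint.} First I would exploit length-type congruences. For each $n$, the relation ``equal length mod $n$'' is a congruence (it is the kernel of $\Sigma^*\to\Z/n\Z$ sending every letter to $1$), so $|x|\equiv|y|\pmod n$ implies $|f(x)|\equiv|f(y)|\pmod n$ for all $n$; taking $y=\varepsilon$ and letting $n$ vary shows $|x|$ divides $|f(x)|-|f(\varepsilon)|$. More usefully, for a fixed $x$ the map $k\mapsto |f(x^k)|$ must have constant first difference modulo every $n$, which forces $|f(x^k)|$ to be \emph{affine} in $k$: there are integers $p_x\ge 0$ (a slope) and $q_x$ with $|f(x^k)| = p_x\,k|x| + q_x$. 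The same kind of argument, applied to the abelianization-type congruences ``number of occurrences of the letter $\alpha$, mod $n$'' for each $\alpha\in\Sigma$, will pin down the Parikh vector of $f(x)$ as an affine function of the Parikh vector of $x$, and comparing slopes across letters forces a single common slope $p=p_x$ independent of $x$. This $p$ will be the number $n$ of occurrences of $x$ in the final expression.

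\textbf{Step 2: locating the occurrences of $x$ in $f(x)$.} This is where I expect the main difficulty. Having learned that $f(x)$ ``contains $p$ copies worth of'' the content of $x$, I need to show $f(x)$ literally factors as $w_0 x w_1\cdots x w_p$ with the $w_i$ independent of $x$. The key tool is that congruences on $\Sigma^*$ are exactly kernels of morphisms, so I can \emph{choose morphisms that separate} what I want to see. The crucial trick: to compare $f(x)$ and $f(x' )$ when $x'=uxv$ is obtained by padding, or when $x$ and $x'$ agree except in one coordinate, pick a monoid $M$ and a morphism $\varphi$ under which $x\sim x'$; CP then forces $\varphi(f(x))=\varphi(f(x'))$, and running over a rich enough family of such $M$ (free products, finite quotients detecting prefixes/suffixes and infixes, the syntactic monoid of a single word, etc.) lets one conclude equalities of actual words by an elementary-combinatorics-on-words argument (Levi's lemma / equidivisibility of the free monoid). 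Concretely, I would first handle the case $p=0$ (showing $f$ is then constant), then $p=1$ (showing $f(x)=w_0 x w_1$ by using congruences that freeze everything outside one occurrence), and then set up an induction on $p$: peel off the first occurrence of $x$ by a morphism that collapses a chosen prefix, reducing to a CP-like function with slope $p-1$. The fact that $|\Sigma|\ge3$ is used precisely here — with only two letters there are coincidences (e.g.\ the morphism $a\mapsto a,\ b\mapsto a$ phenomena, and the abundance of congruences on $\N$) that destroy the separation arguments, which is consistent with the remark in the introduction.

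\textbf{Step 3: the parameters are constant.} Once I know $f(x) = w_0(x)\,x\,w_1(x)\,x\cdots x\,w_p(x)$ with the $w_i(x)\in\Sigma^*$ possibly depending on $x$, I would show each $w_i$ is in fact independent of $x$. For this, fix $i$ and compare $f$ on two inputs $x$ and $x'$ that are congruent under a suitably chosen morphism but ``generic'' enough that the $i$-th gap is forced to match; e.g.\ evaluate at words like $a^m$ versus $a^{m'}$ with $m\equiv m'$ in all relevant moduli, and at $a^m b a^m$-type words, using the earlier affine-length data to rule out the slope absorbing part of a would-be-variable $w_i$. Plugging in $x=\varepsilon$ (legitimate since $\varepsilon\in\Sigma^*$) then identifies $w_0 w_1\cdots w_p$ with $f(\varepsilon)$ and, combined with the gap-matching, recovers each individual $w_i$ as a fixed word. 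This yields $f(x)=w_0 x w_1\cdots w_{p-1} x w_p$, which is the desired form with $n=p$.

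\textbf{Summary of the strategy and the obstacle.} In short: length/Parikh congruences give the \emph{number} $n$ of occurrences of $x$; separating morphisms plus combinatorics on words (equidivisibility) give the \emph{factorization} of $f(x)$ into $n$ occurrences of $x$ interleaved with words $w_i(x)$; and a final congruence-comparison argument shows the $w_i$ are constant. The main obstacle is Step~2 — choosing, for each pair of inputs one wants to compare, a monoid and morphism under which they are congruent yet which still detects the positional information about $f$, and then marshalling Levi's lemma to turn the resulting word equations into the clean factorization. I would expect the bulk of the real work (and the real use of $|\Sigma|\ge3$) to live there.
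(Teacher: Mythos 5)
There is a genuine gap, and in fact two distinct problems. First, your Step 1 does not establish what it claims. The congruences ``length mod $n$'' only yield divisibility constraints: taking $n=(k-j)|x|$ gives that $(k-j)|x|$ divides $|f(x^k)|-|f(x^j)|$, and such integral-difference-ratio constraints do \emph{not} force $k\mapsto|f(x^k)|$ to be affine --- on a one-letter alphabet (i.e.\ on $\N$) there are many non-affine functions satisfying them, such as $x\mapsto\lfloor e\,x!\rfloor$, precisely the examples recalled in the introduction. Your assertion that the first differences are ``constant modulo every $n$'' does not follow from CP-ness for any congruence you name. The paper's Lemma~\ref{l:length} gets affineness of $|f(x)|$ in $|x|$ by a different route: it uses the exact congruences ``same length'' and ``same number of occurrences of $a$'' (kernels of the collapsing and erasing endomorphisms of $\Sigma^*$), plays $\ell$ against $\ell_a$, and crucially evaluates $f$ on mixed words $a^{n-1}b$ --- this is where at least two letters are genuinely needed, and without such an argument your slope $p$ is not even well defined.

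Second, your Step 2 --- which you yourself identify as the bulk of the work --- is a statement of intent, not a proof, and it hides a specific trap. Your proposed induction ``peel off the first occurrence of $x$, reducing to a CP-like function with slope $p-1$'' needs the peeled function $g$ (with $f(x)=xg(x)$ or $f(x)=bg(x)$) to still preserve congruences; for an arbitrary congruence this cannot be obtained by cancelling $\varphi(x)$ or $\varphi(b)$, because the quotient monoid of an arbitrary congruence need not be cancellative. The paper circumvents this by proving the stronger Theorem~\ref{Carac_par_Poly RCP} about \emph{restricted} congruences (kernels of morphisms $\Sigma^*\to\Sigma^*$, whose image is cancellative), so that Lemma~\ref{CAS_PatrickBis} can cancel the common prefix; moreover, before any peeling one must know that the first symbol of $f(x)$ is \emph{uniformly} either a fixed letter $b$ or the first letter of $x$ (the trichotomy Lemma~\ref{Patrick}, proved by the long case analyses of Lemmas~\ref{l:fa=bw} and~\ref{l:fx=xw} using erasing and letter-identifying morphisms), and the induction is on $p_f+e_f$, one symbol at a time, which also makes your Step 3 (constancy of the $w_i$) automatic rather than a separate claim. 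Your proposal correctly identifies the ingredients (length and Parikh congruences, separating morphisms, the role of $|\Sigma|\ge3$), but it proves neither the affineness nor the factorization, so as written it does not constitute a proof.
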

Recall the classical relation between congruences and 
kernels of surjective homomorphisms.
\begin{proposition}\label{p:congruences}
A binary relation $\sim$ on an algebra $\langle\+A,\Omega\rangle$ is a congruence if and only if
there exists some algebra $\langle\+P,\Omega'\rangle$, where  $\Omega$ and $\Omega'$ have the same signature, a surjective homomorphism
$\theta\colon A\to P$ such that $\sim$ is the kernel $\kernel(\theta)$of $\theta$,
i.e.,  $\sim\ =\{(x,y)\mid \theta(x)=\theta(y)\}$. 
$\langle\+P,\Omega\rangle$ is isomorphic to the quotient algebra $\langle\+A,\Omega\rangle/\sim$\,.
\end{proposition}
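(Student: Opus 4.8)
The plan is to prove the two implications of the equivalence separately and then to deduce the final isomorphism assertion as the first isomorphism theorem; every ingredient is a standard universal-algebra construction, so I will mainly indicate where each hypothesis is used.

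For the easy implication, suppose $\sim$ is the kernel of a surjective homomorphism $\theta\colon A\to P$. That $\sim$ is an equivalence relation is inherited from equality on $P$. For compatibility with the operations, fix $\xi\in\Omega$ of arity $k$ and elements with $x_i\sim y_i$, i.e. $\theta(x_i)=\theta(y_i)$, for $i=1,\ldots,k$; applying the homomorphism property twice gives $\theta(\xi(x_1,\ldots,x_k))=\xi(\theta(x_1),\ldots,\theta(x_k))=\xi(\theta(y_1),\ldots,\theta(y_k))=\theta(\xi(y_1,\ldots,y_k))$, whence $\xi(x_1,\ldots,x_k)\sim\xi(y_1,\ldots,y_k)$ and $\sim$ is a congruence.

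For the converse I would build the witness explicitly as the quotient. Writing $[x]$ for the $\sim$-class of $x$, I set the carrier to be $A/{\sim}$ and, for each $\xi\in\Omega$ of arity $k$, define $\overline{\xi}([x_1],\ldots,[x_k])=[\xi(x_1,\ldots,x_k)]$; collecting these operations yields an algebra whose signature coincides with that of $\Omega$. The one step carrying actual content is checking that $\overline{\xi}$ is well defined: if $x_i\sim y_i$ for all $i$, then the congruence axiom for $\sim$ is exactly what gives $\xi(x_1,\ldots,x_k)\sim\xi(y_1,\ldots,y_k)$, i.e. $[\xi(x_1,\ldots,x_k)]=[\xi(y_1,\ldots,y_k)]$. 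Granting this, the canonical projection $\theta\colon A\to A/{\sim}$, $\theta(x)=[x]$, is a surjective homomorphism by the very definition of $\overline{\xi}$, and $\theta(x)=\theta(y)$ holds iff $[x]=[y]$ iff $x\sim y$, so $\kernel(\theta)=\sim$. This exhibits the quotient $\langle A/{\sim},\Omega'\rangle$ as a witness.

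Finally, for the isomorphism claim, let $\theta\colon A\to P$ be an arbitrary surjective homomorphism onto an algebra $\langle P,\Omega'\rangle$ with $\kernel(\theta)=\sim$, and define $\bar\theta\colon A/{\sim}\to P$ by $\bar\theta([x])=\theta(x)$. Well-definedness and injectivity are the two directions of the single chain $[x]=[y]\Leftrightarrow x\sim y\Leftrightarrow\theta(x)=\theta(y)$; surjectivity is inherited from $\theta$; and $\bar\theta$ is a homomorphism since $\bar\theta(\overline{\xi}([x_1],\ldots,[x_k]))=\theta(\xi(x_1,\ldots,x_k))=\xi(\theta(x_1),\ldots,\theta(x_k))=\xi(\bar\theta([x_1]),\ldots,\bar\theta([x_k]))$. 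Hence $\bar\theta$ is an isomorphism from the quotient onto $\langle P,\Omega'\rangle$. I expect no genuine obstacle: the only point with real mathematical weight is the well-definedness of the quotient operations, which is nothing more than a restatement of the congruence axiom.
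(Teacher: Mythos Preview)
Your proof is correct and is the standard argument. Note that the paper actually states this proposition without proof, introducing it as a classical fact (``Recall the classical relation between congruences and kernels of surjective homomorphisms''), so there is no paper proof to compare against; what you have written is precisely the canonical proof one would supply.
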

%

We obtain  restricted notions of congruences by looking at particular monoids.
To such a restricted notion of congruence is associated an {\em a priori } enlarged notion 
of congruence preservation. 
In the next definition, we describe a particular form of restricted congruence which is crucial in our proof of Theorem~\ref{Carac_par_Poly}.

\begin{definition} 
A congruence on $\Sigma^*$ is said to be {\em restricted} if and only if it is the kernel of a morphism
$\Sigma^*\to\Sigma^*$. 
A function preserving restricted congruences is said to be {\em RCP}.
\end{definition}

\begin{example} For $a\in\Sigma$, let $\varphi \colon \Sigma^* \mapsto \langle \Z/2\Z, \times\rangle$  be the morphism  defined by $\varphi(a) =0$ and $\varphi(x) = 1$ for $x \in\Sigma\setminus\{ a\}$.  The kernel of $\varphi$ corresponds to the congruence 
``$a$ occurs in $x$ if and only if $a$ occurs in $y$". It is not a restricted congruence.\end{example}


Using the above notion, we prove a stronger version of Theorem~\ref{Carac_par_Poly}.

\begin{theorem}\label{Carac_par_Poly RCP}
Assume $|\Sigma|\geq 3$. 
A function $f\colon \Sigma^*\to\Sigma^*$ is RCP 
if and only if it  is of the form $f(x)=w_0xw_1x\cdots w_{n-1}xw_n$, for some $w_0,\ldots, w_n\in\Sigma^*$.
\end{theorem}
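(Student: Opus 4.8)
The plan is to prove the nontrivial direction: if $f\colon\Sigma^*\to\Sigma^*$ is RCP (with $|\Sigma|\ge 3$), then $f(x)=w_0xw_1\cdots xw_n$ for suitable words $w_i$. The easy direction follows from Lemma~\ref{poly=>CP}, since every polynomial function is CP and hence a fortiori RCP. So fix an RCP function $f$ and write $f(\varepsilon)=w_0w_1\cdots w_n$ in a way to be determined; the heart of the matter is to extract the correct ``skeleton'' of constant words and to show that $f$ inserts copies of its argument at fixed positions between them.

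The first step is to understand $f$ on single letters and on powers of a letter. The key tool is that, for any two letters $a,b\in\Sigma$, the endomorphism of $\Sigma^*$ that fixes every letter except sending $b\mapsto a$ (or more elaborate letter-to-word substitutions) is a morphism $\Sigma^*\to\Sigma^*$, so its kernel is a restricted congruence and $f$ must preserve it. Applying such substitutions to the identity $x\sim y$ whenever $\varphi(x)=\varphi(y)$ lets one compare $f(x)$ and $f(y)$ after relabelling. I would first use a letter-renaming morphism to show that the number of occurrences of the ``free'' argument is forced to be the same regardless of which letter is plugged in, i.e.\ there is a well-defined integer $n$ (the number of occurrences of $x$ in $f(x)$) and well-defined constant words $w_0,\dots,w_n$; concretely, comparing $f(a)$ with $f(b)$ via the morphism collapsing $b$ to $a$ pins down that $f(b)$ is obtained from $f(a)$ by replacing each occurrence of $a$ that ``came from the argument'' by $b$, while leaving a common constant part fixed. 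The third letter is what rules out degenerate cases: with only two letters there are coincidences (e.g.\ a morphism can collapse the alphabet too much) that would allow extra solutions, so $|\Sigma|\ge 3$ is used precisely to guarantee that one always has a ``fresh'' letter available to act as a marker distinguishing genuine argument-positions from constant positions.

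The main technical step is then a \emph{locating} argument: having guessed $n$ and the $w_i$, one must show $f(x)=w_0xw_1\cdots xw_n$ for \emph{all} $x$, not just single letters. Here I would introduce, for a word $x=a_1\cdots a_m$, the morphism $\psi$ that sends a designated marker letter $c$ (the third letter, unused in $x$ after renaming) to $c$ and builds $x$ out of other letters, together with a second morphism that ``reads off'' where markers land; comparing $f$ of a marked version of $x$ with $f$ of $x$ itself, and using that $f$ preserves the kernels of these morphisms, forces the occurrences of $x$ in $f(x)$ to sit at the same $n$ positions determined by the letter case, with the same constant words $w_i$ in between. An induction on the length of $x$, peeling off one letter at a time and using a substitution morphism that replaces that letter by a longer word, completes the identification. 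Finally one checks $n$ is finite: since $f(a)$ is a fixed finite word for a letter $a$, it contains only finitely many occurrences of $a$, bounding $n$.

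The step I expect to be the main obstacle is the locating argument — proving that the occurrences of the argument in $f(x)$ occur at positions that are \emph{independent of $x$} and separated by \emph{constant} words. The subtlety is that a priori $f(x)$ could interleave pieces of $x$ with constants in an $x$-dependent pattern (for instance, different numbers of copies of $x$, or constants that depend on the letters of $x$); ruling this out requires choosing the substitution morphisms cleverly so that their kernels, which $f$ must respect, are fine enough to detect any deviation from the rigid form $w_0xw_1\cdots xw_n$. Exploiting the third generator as an always-available marker letter is what makes these kernels sufficiently discriminating, and getting that bookkeeping exactly right is the crux of the proof.
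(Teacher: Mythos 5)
Your proposal is a plan rather than a proof: the step you yourself identify as the crux --- the ``locating argument'' showing that for every word $x$ the copies of $x$ inside $f(x)$ sit at positions independent of $x$, separated by the constant words $w_i$ guessed from the single-letter case --- is never carried out, and that is exactly where the whole difficulty of the theorem lies. Even the preliminary step is not established: comparing $f(a)$ and $f(b)$ through the morphism identifying $b$ with $a$ only yields $\varphi_{b,a}(f(a))=\varphi_{b,a}(f(b))$, which does not by itself tell you which occurrences of $a$ in $f(a)$ ``came from the argument''; already for $f(a)=aa$ this equality is compatible with $f(b)\in\{aa,ab,ba,bb\}$, so pinning down a skeleton $w_0,\ldots,w_n$ from letters alone requires a real argument (with a third letter used as a marker) that you only gesture at. Likewise you never prove the quantitative fact that the number of argument copies is the same for every $x$, i.e.\ that $|f(x)|$ is an affine function of $|x|$; without such a length formula nothing rules out $f$ inserting more copies of $x$, or $x$-dependent constant blocks, once $x$ is long --- precisely the deviation you admit you do not know how to exclude.

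For comparison, the paper never tries to match a global template in one shot; it determines only the \emph{first letter} of $f(x)$ and then inducts. Lemmas \ref{CPvslength} and \ref{l:length} give the affine formula $|f(x)|=p_f|x|+e_f$; Lemmas \ref{l:f epsilon}, \ref{l:fa=bw} and \ref{l:fx=xw} establish the trichotomy of Lemma \ref{Patrick} (either $f(x)\in b\Sigma^*$ for all $x$ and a fixed $b$, or $f(x)\in x\Sigma^*$ for all $x$, or $f\equiv\varepsilon$); Lemma \ref{CAS_PatrickBis} cancels the common prefix so that the remaining function $g$ is again RCP with $p_g+e_g=p_f+e_f-1$, and induction on $p_f+e_f$ concludes. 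The combinatorial content of your ``locating argument'' is concentrated in the case analyses of Lemmas \ref{l:fa=bw} and \ref{l:fx=xw}, where $|\Sigma|\geq3$ is used to always have a spare letter to erase, identify, or substitute (e.g.\ the morphism $b\mapsto bc$, $c\mapsto x_nbc$ in Claim 1 of Lemma \ref{l:fx=xw}); arguments of that substance, or an equivalent, are exactly what is missing from your sketch, so as it stands the proposal does not prove the theorem.
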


Theorem \ref{Carac_par_Poly RCP} has an obvious corollary which implies Theorem~\ref{Carac_par_Poly}.

\begin{corollary} Let $|\Sigma|\geq 3$.
A function $f\colon \Sigma^*\to  \Sigma^*$ is CP if and only if it is RCP.
\end{corollary}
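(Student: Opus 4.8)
The plan is to prove the nontrivial direction of the corollary: every RCP function $f\colon\Sigma^*\to\Sigma^*$ (with $|\Sigma|\ge 3$) is of the stated polynomial form, since the converse follows from Lemma~\ref{poly=>CP} together with the trivial fact that CP implies RCP. By Theorem~\ref{Carac_par_Poly RCP} an RCP function already has the desired form, and then Lemma~\ref{poly=>CP} tells us it is CP; so the real content is that RCP alone, a formally weaker hypothesis, already forces the polynomial shape. Thus the corollary is essentially a repackaging: \emph{RCP $\Longrightarrow$ polynomial form $\Longrightarrow$ CP $\Longrightarrow$ RCP}, and the equivalence closes the loop.

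Concretely, I would argue as follows. First, $\textrm{CP}\Rightarrow\textrm{RCP}$ is immediate from the definitions: a restricted congruence is in particular a congruence (the kernel of a morphism $\Sigma^*\to\Sigma^*$ is a congruence by Proposition~\ref{p:congruences}), so a function preserving all congruences preserves all restricted ones. For the reverse, suppose $f$ is RCP. Apply Theorem~\ref{Carac_par_Poly RCP} to conclude $f(x)=w_0xw_1x\cdots w_{n-1}xw_n$ for suitable $w_0,\dots,w_n\in\Sigma^*$. Now invoke Lemma~\ref{poly=>CP} to deduce that this $f$ is CP. Hence RCP $\Rightarrow$ CP, and combined with the first implication we get CP $\iff$ RCP for every $f\colon\Sigma^*\to\Sigma^*$.

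There is no genuine obstacle here, because all the work has been front-loaded into Theorem~\ref{Carac_par_Poly RCP}; the corollary is a formal consequence. The only point worth stating carefully is that the class of restricted congruences is indeed a subclass of the class of all congruences — so that the implication CP $\Rightarrow$ RCP is not vacuous but also not in doubt — and that Theorem~\ref{Carac_par_Poly RCP} was stated with the hypothesis $|\Sigma|\ge 3$, which is exactly the hypothesis of the corollary. I would write this up in three or four lines, essentially chaining the two cited results, and remark that the corollary immediately yields Theorem~\ref{Carac_par_Poly} as a special case, since Theorem~\ref{Carac_par_Poly RCP} characterizes RCP functions and the corollary identifies RCP with CP.

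If one wanted a self-contained statement-level proof rather than a citation chain, the single substantive ingredient would still be Theorem~\ref{Carac_par_Poly RCP}: one needs that a function preserving only the very restrictive family of kernels of endomorphisms of $\Sigma^*$ is already forced into the polynomial normal form. That is where the combinatorics on words and the hypothesis $|\Sigma|\ge 3$ do their job (producing enough endomorphisms of $\Sigma^*$ to pin down $f$), but for the corollary itself we are entitled to take it as given.
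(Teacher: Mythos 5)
Your proof is correct and follows exactly the paper's own argument: CP $\Rightarrow$ RCP is immediate since restricted congruences are congruences, and RCP $\Rightarrow$ CP by chaining Theorem~\ref{Carac_par_Poly RCP} (RCP forces the polynomial form) with Lemma~\ref{poly=>CP} (the polynomial form is CP). Nothing further is needed.
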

\begin{proof}
If $f$ is CP then it is RCP. 
Theorem~\ref{Carac_par_Poly RCP} and Lemma~\ref{poly=>CP} show that 
if $f$ is RCP then it is CP.
\end{proof}

The rest of this section is devoted to the proof of Theorem \ref{Carac_par_Poly RCP},
which shall be given after Lemma~\ref{CAS_PatrickBis}.

\begin{notation} 

1) For $u$ in $\Sigma^*$ and $a\in\Sigma$,  let $|u|$ denote the length of $u$ and  let $|u|_a$ denote the number of occurrences of $a$ in $u$.

2) For $c\in\Sigma$, let $\sim_c$ be the kernel of  the morphism $\theta_c\colon \Sigma^*\to \Sigma^*$ such that $\theta_c(c)=c$, and $\theta_c(x)=\varepsilon$ for all $x\not=c$ in $\Sigma$. Thus  $u\sim_c v$ if and only if   $|u|_c=|v|_c$.
\end{notation}
\begin{lemma}\label{CPvslength} If $f$ is RCP on $\Sigma^*$ and $a,b\in \Sigma$, then 

1) $|u|=|v|$ implies $|f(u)|=|f(v)|$.

2) $|u|_a=|v|_a$ implies $|f(u)|_a=|f(v)|_a$.

3) $b\not=a$ implies  $|f(a^n)|_b=|f(\varepsilon)|_b$.
\end{lemma}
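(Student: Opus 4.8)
The three claims are all of the same shape: they say that certain numeric invariants of a word (total length, number of $a$'s) are determined modulo the value $f$ takes, via preservation of a carefully chosen restricted congruence. So the plan is, for each part, to write down an explicit morphism $\varphi\colon \Sigma^*\to\Sigma^*$ whose kernel identifies exactly the words we want identified, invoke the hypothesis that $f$ is RCP to conclude $f(u)\sim_\varphi f(v)$, and then read off the desired equality by applying a further morphism (a ``counting'' morphism) to both sides.

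\textbf{Part 1.} Pick any letter $c\in\Sigma$ and let $\varphi\colon\Sigma^*\to\Sigma^*$ be the morphism sending every letter of $\Sigma$ to the single letter $c$; its kernel is the relation ``same length'', so $|u|=|v|$ gives $u\sim_\varphi v$, hence (RCP) $f(u)\sim_\varphi f(v)$, i.e. $\varphi(f(u))=\varphi(f(v))$, and since $\varphi$ collapses everything to a power of $c$ this says $c^{|f(u)|}=c^{|f(v)|}$, so $|f(u)|=|f(v)|$.

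\textbf{Part 2.} Fix $a\in\Sigma$ and let $\psi\colon\Sigma^*\to\Sigma^*$ be the morphism with $\psi(a)=a$ and $\psi(x)=\varepsilon$ for $x\neq a$ — this is exactly $\theta_a$ from the Notation, whose kernel is $\sim_a$, i.e. ``same number of $a$'s''. Thus $|u|_a=|v|_a$ implies $u\sim_a v$, so $f(u)\sim_a f(v)$, meaning $\psi(f(u))=\psi(f(v))$, which reads $a^{|f(u)|_a}=a^{|f(v)|_a}$, whence $|f(u)|_a=|f(v)|_a$.

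\textbf{Part 3.} Here we need to compare $f(a^n)$ with $f(\varepsilon)$ with respect to a letter $b\neq a$; the point is that $a^n$ and $\varepsilon$ both contain no $b$. Let $\chi=\theta_b\colon\Sigma^*\to\Sigma^*$ send $b$ to $b$ and every other letter (including $a$) to $\varepsilon$; then $|a^n|_b=0=|\varepsilon|_b$, so $a^n\sim_b\varepsilon$, hence $f(a^n)\sim_b f(\varepsilon)$, i.e. $\chi(f(a^n))=\chi(f(\varepsilon))$, which is $b^{|f(a^n)|_b}=b^{|f(\varepsilon)|_b}$, giving $|f(a^n)|_b=|f(\varepsilon)|_b$ as required. (Equivalently, part 3 is the special case $u=a^n$, $v=\varepsilon$ of part 2 applied to the letter $b$.)

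\textbf{Main obstacle.} There is essentially no obstacle: the only thing to be a little careful about is that each $\varphi$ I write down is genuinely a well-defined \emph{monoid} morphism into $\Sigma^*$ (so that ``restricted'' applies and I may use the RCP hypothesis rather than the stronger CP hypothesis), and that its kernel is \emph{exactly} the stated relation so the implication goes the right way. The underlying idea is simply that length and letter-counts are themselves kernels of morphisms $\Sigma^*\to\Sigma^*$, so RCP already controls them.
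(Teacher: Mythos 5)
Your proof is correct and follows essentially the same route as the paper: for each part you take the kernel of an explicit morphism $\Sigma^*\to\Sigma^*$ (the collapse-to-one-letter morphism for length, $\theta_a$ for counting $a$'s, and $\theta_b$ applied to $a^n\sim_b\varepsilon$ for part 3) and invoke the RCP hypothesis. No gaps; the observation that part 3 is a special case of part 2 is a harmless bonus.
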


\begin{proof} 1) Let $\sim$ be 
the kernel of the  morphism $\varphi\colon \Sigma^*\to \Sigma^*$ such that $\varphi(x)=a$ for all $x$ in $\Sigma$:
$u\sim v$ if and only if   $|u|=|v|$.  As $f$ is RCP, $u\sim v$ implies $f(u)\sim f(v)$ hence $|f(u)|=|f(v)|$.

2) Similar to  the proof of 1) where $\sim$ is replaced by $\sim_a$.

3) Let $\sim_b$ be   as above, $a^n\sim_b\varepsilon$ implies $f(a^n)\sim_b f(\varepsilon)$. Thus $|f(a^n)|_b= |f(\varepsilon)|_b$.
\end{proof}
Thanks to  Lemma \ref{CPvslength}  the following notation makes sense.

\begin{notation}\label{notationEll} Let $f$ be RCP.
Denote by
by $\ell(n)$ the common length of $f(x)$ for  $x$ of length $n$ and 
by $\ell_a(n)$ the number of occurrences of the letter $a$ in the word $f(x)$ for $|x|_a=n$, i.e.,  for $x$ with  $n$ occurrences of $a$.
\end{notation}

\begin{lemma}\label{l:length}   
If $f$ is RCP on $\Sigma^*$ with $\Sigma$ containing at least two letters $a,b$, then the functions $\ell\colon \N\to\N$ 
and $\ell_a\colon \N\to\N$ defined by Notation \ref{notationEll} are affine and  of the form 
 \begin{equation*}\label{eq:f-unary ell}
\ell(n) = \big(\ell(1)-\ell(0)\big) n + \ell(0)
\quad,\quad 
\ell_a(n) = \big(\ell(1)-\ell(0) \big)n + \ell_a(0)
\end{equation*}
Thus, letting $p_f=\ell(1)-\ell(0)$ and $e_f=\ell(0)=|f(\varepsilon)|$, we have, for all $x\in\Sigma^*$, 
\begin{equation}\label{eq:f-unary}
|f(x)| = p_f |x| + e_f=p_f |x| + |f(\varepsilon)|
\quad,\quad 
|f(x)|_a = p_f |x|_a + |f(\varepsilon)|_a
\end{equation}
\end{lemma}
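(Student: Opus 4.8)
The plan is to exploit RCP-ness against a family of simple morphisms $\Sigma^* \to \Sigma^*$ and to prove that $\ell$ and $\ell_a$ are affine by establishing the single equation $\ell(n+1) - \ell(n) = \ell(1) - \ell(0)$ for all $n$ (and similarly for $\ell_a$), which immediately forces the closed forms displayed in the statement. First I would fix the two letters $a, b \in \Sigma$. To compare $f$ on words of length $n$ versus $n+1$, I would use the morphism $\varphi \colon \Sigma^* \to \Sigma^*$ that sends every letter to $a$, so that $\ker \varphi$ is the relation ``same length''; RCP then says $|f(u)| = |f(v)|$ whenever $|u| = |v|$ (this is Lemma~\ref{CPvslength}(1), already available). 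The affine behaviour, however, cannot come from a length-collapsing morphism alone — it must come from a morphism that \emph{glues} a letter, i.e.\ behaves like multiplication on exponents.

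The key step is to pick, for each $n$, a morphism $\psi_n \colon \Sigma^* \to \Sigma^*$ that realizes ``blow up $a$ into $a^n$'' — concretely $\psi_n(a) = a^n$ and $\psi_n(x) = x$ for $x \neq a$ (or a variant sending the other letters to $\varepsilon$). Applying RCP to a cleverly chosen pair of words whose images under $\psi_n$ have a controlled length or $a$-count, one obtains a relation of the form $\ell(m+n) = \ell(m) + \ell(n) - \ell(0)$, i.e.\ the function $n \mapsto \ell(n) - \ell(0)$ is additive on $\N$; an additive function $\N \to \N$ is linear, so $\ell(n) - \ell(0) = (\ell(1) - \ell(0))\, n$. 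The same morphism, tracking occurrences of $a$ instead of total length, yields $\ell_a(n) - \ell_a(0) = (\ell_a(1) - \ell_a(0))\, n$; and then one must identify the slope $\ell_a(1) - \ell_a(0)$ with $\ell(1) - \ell(0) = p_f$. For that identification I would combine Lemma~\ref{CPvslength}(3) (for $b \neq a$, $|f(a^n)|_b = |f(\varepsilon)|_b$ is constant) with the length equation: taking $x = a^n$, the length $\ell(n) = |f(a^n)|$ decomposes as $\sum_{c \in \Sigma} |f(a^n)|_c$, and all summands with $c \neq a$ are constant in $n$, so the growth of $\ell(n)$ equals the growth of $\ell_a(n)$ — forcing the two slopes to coincide. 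The final formulas \eqref{eq:f-unary} are then just the evaluation of the affine functions $\ell, \ell_a$ at $n = |x|$ and $n = |x|_a$ respectively, using $\ell(0) = |f(\varepsilon)|$ and $\ell_a(0) = |f(\varepsilon)|_a$.

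I expect the main obstacle to be the \emph{additivity} step: constructing the right test words $u, v$ so that RCP applied through $\psi_n$ delivers exactly $\ell(m+n) + \ell(0) = \ell(m) + \ell(n)$ and not some weaker inequality. The subtlety is that a single blow-up morphism relates $\ell$ at a ``multiplied'' argument, not an ``added'' one, so one likely needs a two-step argument: first show $\ell(kn)$ relates to $\ell(n)$ via $\psi_k$ on a word of $a$-length $n$, then use the length morphism to interpolate and conclude genuine linearity (Cauchy-type, but over $\N$ where monotone-or-bounded additive functions are automatically linear, so no pathology). A secondary, purely bookkeeping obstacle is making sure the morphisms $\psi_n$ are well-defined endomorphisms of $\Sigma^*$ \emph{valued in $\Sigma^*$} (which they are, trivially, since we only need $|\Sigma| \geq 2$ here — the hypothesis $|\Sigma| \geq 3$ of Theorem~\ref{Carac_par_Poly RCP} is not needed for this lemma, and indeed the statement only assumes two letters). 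Once linearity of $\ell$ and $\ell_a$ is in hand and their common slope is pinned down, the rest is immediate substitution.
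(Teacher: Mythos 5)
Your overall skeleton (reduce to a difference/Cauchy equation for $\ell$, identify the slope of $\ell_a$ with that of $\ell$ via the decomposition $|f(a^n)|=\sum_c|f(a^n)|_c$ and Lemma~\ref{CPvslength}(3), then evaluate at $n=|x|$, $n=|x|_a$) is sound, and the slope-identification part is exactly the paper's first step, yielding $\ell(n)-\ell(0)=\ell_a(n)-\ell_a(0)$. You are also right that only two letters are needed here. But the central step --- actually proving that $\ell$ is affine --- has a genuine gap: the blow-up morphisms $\psi_n$ cannot deliver it. With $\psi_n(a)=a^n$ and $\psi_n(x)=x$ for $x\neq a$, the morphism is injective, so its kernel is the equality relation and RCP gives no constraint at all; with the variant $\psi_n(x)=\varepsilon$ for $x\neq a$, the kernel is just $\sim_a$ (same number of $a$'s), and applying RCP only reproduces $|f(u)|_a=|f(v)|_a$ when $|u|_a=|v|_a$, which you already have from Lemma~\ref{CPvslength}(2). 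Your phrase ``show $\ell(kn)$ relates to $\ell(n)$ via $\psi_k$'' implicitly asks $f$ to commute (or interact equivariantly) with the endomorphism $\psi_k$, i.e.\ to relate $f(\psi_k(u))$ with $\psi_k(f(u))$; RCP never provides such a relation --- it only says $\varphi(u)=\varphi(v)\Rightarrow\varphi(f(u))=\varphi(f(v))$. So the additivity $\ell(m+n)=\ell(m)+\ell(n)-\ell(0)$ is never actually derived, and this is precisely the obstacle you yourself flagged.

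The missing idea (and the paper's route) is to test $f$ on words mixing the two letters and decompose lengths letter by letter. Taking $x=a^{n-1}b$ (or, equivalently, $a^mb^n$), Lemma~\ref{CPvslength}(2) gives $|f(x)|_a=\ell_a(n-1)$, $|f(x)|_b=\ell_b(1)$, and $|f(x)|_c=|f(\varepsilon)|_c$ for $c\neq a,b$ (since $|x|_c=0$); summing over letters yields
$\ell(n)=\ell_a(n-1)+\ell_b(1)+\sum_{c\neq a,b}|f(\varepsilon)|_c$, and combining this with your (correct) relation $\ell(n)-\ell(0)=\ell_c(n)-\ell_c(0)$ applied to both $a$ and $b$ gives the difference equation $\ell(n)=\ell(n-1)+\bigl(\ell(1)-\ell(0)\bigr)$, hence affinity by induction; $\ell_a$ then follows. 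No blow-up morphisms are needed --- everything comes from the erasing/collapsing morphisms behind Lemma~\ref{CPvslength} together with the two-letter test words. As written, your proof does not close at this step.
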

\begin{proof} 
1) Note that $|z| = \sum_{\alpha\in \Sigma} |z|_\alpha$. Applying this to $f(a^n)$ we get
$$
\begin{array}{rcl}
\ell(n) &=& |f(a^n)|  \\
&= &\sum_{b\in \Sigma} |f(a^n)|_b 
\\
&=& \ell_a(n) + \sum_{b\in \Sigma\setminus\{a\}} |f(\varepsilon)|_b\qquad \qquad\qquad{\text{ by Lemma \ref{CPvslength} -3)}}\\
&=&\ell_a(n) -|f(\varepsilon)|_a+ \sum_{b\in \Sigma }|f(\varepsilon)|_b=
 \ell_a(n) -\ell_a(0) +   |f(\varepsilon)|
\\
&=&\ell_a(n) -\ell_a(0) + \ell(0)
\end{array}
$$
which yields
\begin{equation}\label{ln-l0} \ell(n)-\ell(0) = \ell_a(n) - \ell_a(0)  
\end{equation}

2) Let  us now take $x=a^{n-1}b$ (recall $\Sigma$ has at least two letters).
$$
\begin{array}{rcl}
\ell(n) &=& |f(a^{n-1} b)|  =|f(a^{n-1}b)|_a + |f(a^{n-1}b)|_b + \sum_{c\neq a,b} |f(a^{n-1}b)|_c
\\
&=& \ell_a({n-1}) + \ell_b(1) + \sum_{c\neq a,b} |f(\varepsilon)|_c
\\
&=& (\ell_a({n-1})-\ell_a(0)) + (\ell_b(1)-\ell_b(0)) + \sum_{c\in \Sigma} |f(\varepsilon)|_c
\\
&=& (\ell_a({n-1})-\ell_a(0)) + (\ell_b(1)-\ell_b(0)) + |f(\varepsilon)|
\\
&=& (\ell({n-1})-\ell(0)) + (\ell(1)-\ell(0)) + \ell(0) \qquad\qquad{\text{by equation \eqref{ln-l0}}}
\\
&=& \ell({n-1}) + (\ell(1)-\ell(0)) 
\\
\ell(n) &=& n (\ell(1) - \ell(0))  + \ell(0)\qquad\qquad\qquad\qquad\qquad\quad{\text{by induction  on }}n
\end{array}
$$
Letting $p_f=\ell(1) - \ell(0)$,  we have $\ell(n)=np_f + \ell(0) = np_f+  |f(\varepsilon)|$.
Similarly, applying equation \eqref{ln-l0}, $\ell_a(n)=\ell(n) + \ell_a(0)-\ell(0) =np_f  + \ell_a(0) 
= p_f n + |f(\varepsilon)|_a$.
\end{proof}

\begin{notation}\label{sim_{a,b}}

For $c,d\in\Sigma$ let $\sim_{d,c}$ denote the congruence   kernel of the morphism identifying $d$ with $c$: $\varphi(d) =c$ and $\varphi(x) =x$ for $x\not=d$.

\end{notation}


\begin{lemma}\label{l:f epsilon} 
Assume $f : \Sigma^* \to \Sigma^*$ is RCP.
If there is some $u\neq\varepsilon$ such that $f(u)=\varepsilon$
then $f$ is constant on $\Sigma^*$ with value $\varepsilon$.
 \end{lemma}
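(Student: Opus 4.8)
The plan is to exploit the length and occurrence-count formulas from Lemma~\ref{l:length} together with a few well-chosen restricted congruences to force $f$ to be constant once it annihilates a nonempty word. First I would observe that if $f(u)=\varepsilon$ for some $u\neq\varepsilon$, then by the first formula in~\eqref{eq:f-unary} we have $0=|f(u)|=p_f|u|+|f(\varepsilon)|$; since $|u|\ge 1$ and both $p_f|u|$ and $|f(\varepsilon)|=e_f$ are nonnegative integers, this forces $p_f=0$ and $|f(\varepsilon)|=0$, i.e.\ $f(\varepsilon)=\varepsilon$. Hence $|f(x)|=p_f|x|+e_f=0$ for every $x\in\Sigma^*$, so $f(x)=\varepsilon$ for all $x$, and we are done.

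Wait — this is too quick to be the intended argument unless $|\Sigma|\ge 2$ is available, which it is (the lemma lives inside the section assuming $|\Sigma|\ge 3$, and Lemma~\ref{l:length} itself only needs two letters). So the genuinely short route is exactly the above: apply Lemma~\ref{l:length} to get $|f(x)|=p_f|x|+|f(\varepsilon)|$, evaluate at $x=u$ to conclude $p_f=0$ and $f(\varepsilon)=\varepsilon$, and then read off $|f(x)|=0$ for all $x$. No auxiliary congruences beyond those already used to establish Lemma~\ref{l:length} are needed.

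The only point that requires a word of care is the nonnegativity: $\ell(0)=|f(\varepsilon)|\ge 0$ trivially, and $p_f|u|\ge 0$ because $p_f=\ell(1)-\ell(0)\ge 0$ — the latter holds since $\ell$ is the length of $f$ applied to words of length $1$, which cannot be shorter than... actually this monotonicity is not immediate from the statement alone, but we do not need it: from $p_f|u|+|f(\varepsilon)|=0$ with $|f(\varepsilon)|\ge 0$ we get $p_f|u|\le 0$, and since $|u|\ge 1$ this gives $p_f\le 0$; combined with the fact that $\ell(2)=2p_f+\ell(0)\ge 0$ (a length is nonnegative) we get $p_f\ge -\ell(0)/2$, and iterating, $np_f+\ell(0)\ge 0$ for all $n$ forces $p_f\ge 0$. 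Hence $p_f=0$ and then $|f(\varepsilon)|=0$. I expect no real obstacle here; the main (minor) subtlety is precisely this sign bookkeeping to rule out a negative $p_f$, after which the conclusion $f\equiv\varepsilon$ is immediate from~\eqref{eq:f-unary}.
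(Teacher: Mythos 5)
Your proposal is correct and is essentially the paper's own proof: evaluate equation~\eqref{eq:f-unary} of Lemma~\ref{l:length} at $x=u$ to get $p_f|u|+e_f=0$, conclude $p_f=e_f=0$, and hence $|f(x)|=0$ for all $x$. Your extra bookkeeping ruling out $p_f<0$ is a detail the paper leaves implicit (nonnegativity of $\ell(n)=np_f+\ell(0)$ for all $n$ forces $p_f\geq 0$), but it does not change the argument.
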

\begin{proof}
By  equation \eqref{eq:f-unary} of Lemma~\ref{l:length} we have $|f(x)|=p_f|x|+e_f$ for all $x\in A^*$.
In particular, $0=|\varepsilon|=|f(u)|=p_f|u|+e_f$, and hence $p_f=e_f=0$.  
Thus, $|f(x)| = 0$ and $f(x) = \varepsilon$ for all $x$.
\end{proof}
We now show that
if the first letter of $f(x)$ is $b$ for {\em some letter} $x\in\Sigma$ different from $b$
then the same is true for {\em every word} $x\in\Sigma^*$.

\begin{lemma}\label{l:fa=bw} 
Assume $|\Sigma|\geq3$ and $f : \Sigma^* \to \Sigma^*$ is RCP.
If there are $a,b\in\Sigma$ such that $a\neq b$ and $f(a)\in b\Sigma^*$
then $f(x)\in b\Sigma^*$ for all words $x\in \Sigma^*$.
 \end{lemma}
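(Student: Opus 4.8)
The plan is to argue by contradiction: suppose $f(a) \in b\Sigma^*$ for some $a \neq b$, but there is a word $x$ with $f(x) \notin b\Sigma^*$. Since $|\Sigma| \geq 3$, pick a third letter $c \notin \{a,b\}$. The key tool will be the congruences $\sim_{d,c}$ from Notation~\ref{sim_{a,b}}, together with the length formulas of Lemma~\ref{l:length}. First I would dispose of the degenerate case: if $f$ is constant with value $\varepsilon$ then the hypothesis $f(a)\in b\Sigma^*$ fails, so by Lemma~\ref{l:f epsilon} we may assume $f(u)\neq\varepsilon$ for all $u\neq\varepsilon$, and in fact (using $p_f = \ell(1)-\ell(0)$ and the hypothesis that $|f(a)|\ge 1$) one checks $p_f \geq 1$, so $f(x)\neq\varepsilon$ whenever $x\neq\varepsilon$; the value $f(\varepsilon)$ needs separate care at the end.

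The heart of the argument: I want to promote ``$f$ starts with $b$'' from the single letter $a$ to all letters, then to all words. For a letter $d \notin \{a, b\}$, consider the morphism $\varphi$ with $\varphi(d) = a$ and $\varphi(x) = x$ otherwise, whose kernel is (a renaming of) $\sim_{d,a}$; since $d \sim_{d,a} a$ and $f$ is RCP, $f(d) \sim_{d,a} f(a)$, i.e. $\varphi(f(d)) = \varphi(f(a))$. Because $f(a) \in b\Sigma^*$ and $b$ is fixed by $\varphi$ (as $b \neq d$ and $b \neq a$), we get $\varphi(f(a)) \in b\Sigma^*$, hence $\varphi(f(d)) \in b\Sigma^*$. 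Now $f(d)$ is a nonempty word; applying $\varphi$ (which only erases nothing — it renames $d$ to $a$, a length-preserving inj
on letters) does not change the first letter unless that first letter is $d$. So either $f(d)$ starts with $b$, or $f(d)$ starts with $d$. To rule out the latter I would use a second, independent congruence: swap the roles, sending $b$ to $a$ via $\psi$ with $\psi(b)=a$, $\psi(x)=x$ otherwise; since $a \sim_{b,a} b$ we get $f(a)\sim_{b,a} f(b)$, and chasing first letters through $\psi$ (noting $\psi(f(a))$ starts with $a = \psi(b)$) forces $f(b)$ to start with $b$ or with $a$ — and a symmetric three-way comparison among $a$, $b$, $d$ pins every $f(\text{letter})$ to start with $b$. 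Concretely, the clean way is: for any two distinct letters $x, y$, apply the morphism identifying them and compare first letters of $f(x), f(y)$; doing this for all pairs among $\{a, b, c\}$ (and then, for any fourth letter $d$, among $\{a, b, d\}$) forces a common first letter, which must be $b$ since $f(a)$ starts with $b$. Then for an arbitrary word $x = x_1 x_2 \cdots$ with $x_1$ a letter, use the morphism $\varphi$ with $\varphi(x_1) = b$-witnessing first letter together with the congruence $\sim$ from Lemma~\ref{CPvslength} style arguments — more precisely, pick a letter $e$ with $f(e) \in b\Sigma^*$ (any letter, by the previous step) and the morphism sending everything to... — actually the cleanest is: for arbitrary $x$, let $\varphi$ be the morphism with $\varphi(y)=y$ for the first letter $y = x_1$ of $x$ handled via an auxiliary comparison; I would instead use that $x \sim x_1$ under the congruence ``same first letter after some erasing'' only if such a congruence is restricted — so the honest route is to note $f(x)$ nonempty and to identify the first letter of $x$ with some chosen letter, reducing to words beginning with a fixed letter, then induct.

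The main obstacle I expect is exactly this last reduction from letters to arbitrary words, and the handling of $f(\varepsilon)$. For the word case, given $x = yx'$ with $y \in \Sigma$ and $x' \in \Sigma^*$, I would pick a letter $g \neq y$ (possible since $|\Sigma|\geq 2$) and the morphism $\varphi_{g,y}$ identifying $g$ with $y$; but that does not directly relate $f(x)$ to $f(y)$. The right trick is: choose a congruence under which $x$ is equivalent to a single letter. Such a congruence is $\sim_{\text{len}\le 1}$-type only via a morphism into $\Sigma^*$ that collapses all of $x$ down — but morphisms cannot shorten arbitrarily while staying surjective onto $\Sigma^*$, unless we map into a submonoid. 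In fact the morphism $\varphi: \Sigma^* \to \Sigma^*$ with $\varphi(y) = y$ for one fixed letter $y$ and $\varphi(z) = \varepsilon$ for all other $z$ sends $x$ to $y^{|x|_y}$; this is restricted, and $x \sim y^{|x|_y}$. Hence $f(x) \sim f(y^{|x|_y})$, i.e. $\varphi(f(x)) = \varphi(f(y^{|x|_y}))$. Now if I can show $f$ starts with $b$ on all powers $y^n$ — which follows from the letter case by yet another application of $\sim_{d,c}$-type congruences and the length bound $p_f \geq 1$ ensuring $f(y^n)\neq\varepsilon$ — then $\varphi(f(y^{|x|_y}))$ either starts with $b$ (if $b \neq y$, since $\varphi$ fixes $b$... but wait, if $b = y$ then $\varphi$ kills nothing relevant) and I recover that $\varphi(f(x))$ starts with $b$ or $\varepsilon$; combined with $f(x)\neq\varepsilon$ and a parallel argument with $b$ replaced by another letter, I conclude $f(x)$ itself starts with $b$. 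The bookkeeping of which letters get erased versus fixed is the genuinely delicate part, and I would organize it by always keeping $b$ and one auxiliary letter $\neq b$ outside the erased set, exploiting $|\Sigma| \geq 3$ at precisely that point. Finally, $f(\varepsilon)$: if $f(\varepsilon) = \varepsilon$ it vacuously fails to contradict anything only if it must be in $b\Sigma^*$ — here I use $\varepsilon \sim a$ under the morphism killing everything but... no, $\varepsilon \not\sim a$ in general; instead I compare $f(\varepsilon)$ with $f(b^n)$ or note that $b\Sigma^*$ is meant for $x\neq\varepsilon$, or rather re-read the claim — the statement says ``for all words $x$'', so I must show $f(\varepsilon)\in b\Sigma^*$ too, which I get from $f(\varepsilon)\sim f(d)$ under $\sim_{d,c}$ only if $\varepsilon\sim d$, false; so instead: by Lemma~\ref{l:length}, $|f(\varepsilon)|_b = |f(b)|_b - p_f \geq |f(b)| - p_f \cdot(\text{stuff})$, and since $f(b)\in b\Sigma^*$ forces enough $b$'s, a counting argument via $|f(\varepsilon)|_c = |f(c)|_c - p_f$ for $c\neq b$ shows $f(\varepsilon)$ has no room for a non-$b$ first letter. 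That final counting step is where I'd spend the most care.
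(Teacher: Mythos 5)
Your first step---pinning down the first letter of $f(c)$ for every \emph{letter} $c$ by comparing $f$-values under the identifying morphisms of Notation~\ref{sim_{a,b}}---is sound and is essentially the paper's treatment of the length-one case (note, though, that for $d\notin\{a,b\}$ the alternative ``$f(d)$ starts with $d$'' is already impossible, since then $\varphi(f(d))$ would start with $a\neq b$; the genuinely delicate letter is $c=b$, which the paper settles by intersecting the two constraints $\{a,b\}\Sigma^*\cap\{d,b\}\Sigma^*=b\Sigma^*$).

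The gap is in the passage from letters to arbitrary words, which is the heart of the lemma. Your central device---the morphism fixing one letter $y$ and erasing all others, so that $x\sim y^{|x|_y}$ and hence $\varphi(f(x))=\varphi(f(y^{|x|_y}))$---cannot pin down the first letter of $f(x)$: erasing all letters but one destroys every positional fact about the erased letters, so this equality only yields the occurrence count $|f(x)|_y=|f(y^{|x|_y})|_y$ (already Lemma~\ref{CPvslength}), or at best $f(x)\in(\Sigma\setminus\{b\})^*b\Sigma^*$ when $y=b$; a value such as $f(x)=cb\cdots$ passes all of these tests. Moreover, your premise that $f(y^n)\in b\Sigma^*$ for all $n$ ``follows from the letter case by another $\sim_{d,c}$-type application'' is unsubstantiated: identifying morphisms preserve length, so they never relate $y^n$ to a single letter, and in the paper precisely the words $c^{n+1}$ and $b^{n+1}$ are the hard cases, requiring the full induction. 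Finally, the counting argument you sketch for $f(\varepsilon)$ cannot work even in principle, since letter counts do not distinguish $cb$ from $bc$. The paper closes all of this with an induction on $|x|$ in which $f(x)$ is compared, via a morphism $\psi_c$ erasing a \emph{single} letter $c$, with $f$ of a word of length $|x|-1$ covered by the induction hypothesis; this gives only $f(x)\in c^*b\Sigma^*$, and the first letter is then forced by repeating the argument with a second letter $d\neq c$ and using $c^*b\Sigma^*\cap d^*b\Sigma^*=b\Sigma^*$ (the same intersection trick handles $f(\varepsilon)$, comparing it with $f(c)$ and $f(d)$). That mechanism---shortening the argument word by one occurrence and intersecting two single-letter erasures---is the idea missing from your plan.
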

\begin{proof}  We first prove that $f(c)\in b\Sigma^*$ for all $c\in\Sigma$. We argue by cases and use the morphisms  identifying letters  defined in Notation \ref{sim_{a,b}}. 
\\
\textbullet\ 
{\it Case $c\notin\{a,b\}$.}
Since $\varphi_{a,c}(a) = \varphi_{a,c}(c)$ we have $\varphi_{a,c}(f(a)) = \varphi_{a,c}(f(c))$.
As the first letter of $f(a)$ is $b$ which is not in $\{a,c\}$, 
it is equal to the first letter of $\varphi_{a,c}(f(a))$.
As $\varphi_{a,c}(f(a)) = \varphi_{a,c}(f(c))$, the first letter of $\varphi_{a,c}(f(c))$ is $b$ 
hence the first letter of $f(c)$ must also be $b$. 
Thus, $f(c)\in b\Sigma^*$.\\
\textbullet\ 
{\it Case $c=a$.} Trivial since condition $f(a)\in b\Sigma^*$ is our assumption.
\\
\textbullet\ 
{\it Case $c=b$.}
We know (by the two previous cases) that
$f(x)\in b\Sigma^*$ for all $x\in A\setminus\{b\}$.
As $|\Sigma|\geq 3$ there exists $d\notin\{a,b\}$.
Observe that
\\
- $\varphi_{a,b}(a) = \varphi_{a,b}(b)$, and  hence
$\varphi_{a,b}(f(a)) = \varphi_{a,b}(f(b))$.
As $f(a)\in b\Sigma^*$, we get $f(b)\in \{a,b\} \Sigma^*$.
\\
- $\varphi_{d,b}(d) = \varphi_{d,b}(b)$ whence
$\varphi_{d,b}(f(d)) = \varphi_{d,b}(f(d))$.
As $f(d)\in b\Sigma^*$, we get $f(b)\in \{d,b\} \Sigma^*$.
\\
Thus, $f(b)\in \{a,b\}\Sigma^* \cap \{d,b\} \Sigma^* = b\Sigma^*$.

\medskip
 We next prove by induction on $n=|x|$ 
that $f(x)\in b\Sigma^*$ for all words $x\in \Sigma^*$.\\
\textbullet\
{\it Base case $n=1$:} The  case $n=1$  coincides with what was proved above.
\\\textbullet\ 
{\it Base case $n=0$:} 
Since $|\Sigma|\geq3$ there are  $c,d\in\Sigma$ such that $b,c,d$ are pairwise distinct.
The base case $n=1$ insures that $f(c)=bu$ and $f(d)=bv$ for some $u,v\in\Sigma^*$.
For $c\in\Sigma$ let $\psi_c : \Sigma^*\to\Sigma^*$ be the morphism which erases $c$~:
$\psi_c(c)=\varepsilon$ and $\psi_c(x)=x$ for every $x\in\Sigma\setminus\{c\}$.
As $\psi_c(\varepsilon) = \psi_c(c)$ we have  
$\psi_c(f(\varepsilon)) = \psi_c(f(c)) = \psi_c(bu) = bs$ for some $s\in\Sigma^*$.
Equality $\psi_c(f(\varepsilon)) = bs$ shows that $f(\varepsilon) \in c^*b\Sigma^*$.
Arguing with $\psi_d$ we similarly get $f(\varepsilon) \in d^*b\Sigma^*$.
Thus, $f(\varepsilon)
 \in c^*b\Sigma^*\cap d^*b\Sigma^* = b\Sigma^*$.
\\\textbullet\
{\it Inductive step: from $\leq n$ to $n+1$ where $n\geq1$.}
We assume that $f(y)\in b\Sigma^*$ for every $y\in\Sigma^*$ of length at most $n$.
Let $x\in\Sigma^{n+1}$, we prove that $f(x)\in b\Sigma^*$.
We argue by cases.
\\
$\blacktriangleright$\
{\it Case 1: $|\{c\in\Sigma\setminus\{b\} \mid c\text{ occurs in }x\}| \geq 2$.}
Then $x=ucvdw$ where $u,v,w\in\Sigma^*$ and $c,d,b$ are pairwise distinct letters in $\Sigma$.
We consider the erasing morphisms $\psi_c$ and $\psi_d$.
As $|uvdw|=n$ the induction hypothesis yields $f(uvdw)=bt$ for some $t\in\Sigma^*$.
Now, $\psi_c(x)=\psi_c(uvdw)$, and hence 
$\psi_c(f(x))=\psi_c(f(uvdw))=\psi_c(bt)=bs$ for some $s\in\Sigma^*$.
Equality $\psi_c(f(x))=bs$ shows that $f(x)\in c^*b\Sigma^*$.
Arguing similarly with $\psi_d$ and $ucvw$, we get $f(x)\in d^*b\Sigma^*$.
Thus, $f(x)\in c^*b\Sigma^* \cap d^*b\Sigma^* = b\Sigma^*$.
\\
$\blacktriangleright$\
{\it Case 2: $c$ is the unique letter in $\Sigma\setminus\{b\}$ which occurs in $x$
and it occurs at least twice.}
Then $x=ucvcw$ where $u,v,w\in\Sigma^*$.
As $|\Sigma|\geq 3$ there exists $d\notin\{b,c\}$.
The word $ucvdw$ is relevant to Case 1, and hence $f(ucvdw)=bt$ for some $t\in\Sigma^*$.
We consider the morphism $\varphi_{d,c}$ which identifies $d$ with $c$.
We have $\varphi_{d,c}(ucvdw)=x=\varphi_{d,c}(x)$. Hence 
$\varphi_{d,c}(f(x))=\varphi_{d,c}(f(ucvdw))=\varphi_{d,c}(bt)=bs$
for some $s\in\Sigma^*$.
Since $b\notin\{c,d\}$ equality $\varphi_{d,c}(f(x))=bs$ shows that $f(x)\in b\Sigma^*$.
\\
$\blacktriangleright$\
{\it Case 3: $c$ is the unique letter in $\Sigma\setminus\{b\}$ which occurs in $x$
and it occurs only once.}
Then $x=b^k c b^\ell$ where $k+\ell=n\geq1$.
The word $c^{n+1}$ is relevant to Case 2, and hence $f(c^{n+1})=bt$ for some $t\in\Sigma^*$.
Consider the morphism $\varphi_{b,c}$ which identifies $b$ with $c$.
We have $\varphi_{b,c}(x)=\varphi_{b,c}(c^{n+1})$. Hence 
$\varphi_{b,c}(f(x))=\varphi_{b,c}(f(c^{n+1}))=\varphi_{b,c}(bt)=bs$
for some $s\in\Sigma^*$.
Equality $\varphi_{b,c}(f(x))=bs$ shows that $f(x)\in \{b,c\}\Sigma^*$.

As $|\Sigma|\geq 3$ there exists $d\notin\{b,c\}$.
Let $y$ be obtained from $x$ by replacing the first occurrence of $b$ by $d$.
The word $y$ is relevant to Case 1, and hence $f(y)=bt$ for some $t\in\Sigma^*$. 
Consider the morphism $\varphi_{d,b}$ which identifies $d$ with $b$.
We have $\varphi_{d,b}(y)=x=\varphi_{d,b}(x)$. Hence 
$\varphi_{d,b}(f(x))=\varphi_{d,b}(f(y))=\varphi_{d,b}(bt)=bs$ for some $s\in\Sigma^*$.
Equality $\varphi_{d,b}(f(x))=bs$ shows that $f(x)\in \{b,d\}\Sigma^*$.
Thus, $f(x)\in \{b,c\}\Sigma^* \cap \{b,d\}\Sigma^* = b\Sigma^*$.
\\
$\blacktriangleright$\
{\it Case 4: $x=b^{n+1}$.}
As $|\Sigma|\geq 3$ there exists $c,d\in\Sigma$ such that $b,c,d$ are pairwise distinct.
The words $b^nc$ and $b^nd$ are relevant to Case 3, and
hence $f(b^nc)=bt$ and $f(b^nd)=bs$ for some $s,t\in\Sigma^*$.  
Consider the morphism $\varphi_{c,b}$ which identifies $c$ with $b$.
We have $\varphi_{c,b}(b^nc)=x=\varphi_{c,b}(x)$. Hence 
$\varphi_{c,b}(f(x))=\varphi_{c,b}(f(b^nc))=\varphi_{c,b}(bt)=br$ for some $r\in\Sigma^*$,
whence $f(x)\in \{b,c\}\Sigma^*$.
Arguing similarly with $b^nd$ and the morphism $\varphi_{d,b}$ which identifies $d$ with $b$
we get $f(x)\in \{b,d\}\Sigma^*$.
Thus, $f(x)\in \{b,c\}\Sigma^* \cap \{b,d\}\Sigma^* = b\Sigma^*$.
\end{proof}
We now show that
if $x$ is a prefix of $f(x)$ for every letter $x\in\Sigma$ 
then the same is true for every word $x\in\Sigma^*$.

\begin{lemma}\label{l:fx=xw} 
Assume $|\Sigma|\geq3$ and $f : \Sigma^* \to \Sigma^*$ is RCP.
If $f(a)\in a\Sigma^*$ for all $a\in\Sigma$
then $f(x)\in x\Sigma^*$ for all $x\in\Sigma^*$.
 \end{lemma}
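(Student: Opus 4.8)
The plan is to prove $f(x)\in x\Sigma^*$ by induction on $|x|$. The cases $|x|=0$ (trivially) and $|x|=1$ (the hypothesis) start the induction. For the inductive step, fix $x=a_1\cdots a_{m+1}$ with $m\ge 1$ and assume $f(y)\in y\Sigma^*$ whenever $|y|\le m$. I would then split into cases according to the set $S$ of letters occurring in $x$, ordering the cases so that each one appeals only to cases treated earlier at the \emph{same} length or to the induction hypothesis for shorter words.

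The principal case is $|S|\ge 3$. For each $c\in S$, let $x_c$ be the word obtained from $x$ by deleting every occurrence of $c$; it is strictly shorter than $x$, and $\psi_c(x)=x_c=\psi_c(x_c)$ (where $\psi_c$ is the morphism deleting $c$), so RCP together with $f(x_c)\in x_c\Sigma^*$ yields that $\psi_c(f(x))$ has $x_c$ as a prefix. I would then show, by an inner induction on $i$ from $1$ to $m+1$, that $a_1\cdots a_i$ is a prefix of $f(x)$: knowing the prefix $a_1\cdots a_{i-1}$, pick two distinct letters $c,c'\in S\setminus\{a_i\}$ (possible since $|S|\ge 3$); cancelling the common prefix $\psi_c(a_1\cdots a_{i-1})$ in the relation ``$\psi_c(f(x))$ has prefix $x_c$'' and using $c\ne a_i$ forces the $i$-th letter of $f(x)$ into $\{a_i,c\}$, and symmetrically into $\{a_i,c'\}$, hence it equals $a_i$. (That $f(x)$ even has an $i$-th letter follows from the same prefix relations, so no separate length estimate is needed.)

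When $|S|\le 2$ I would bring in a fresh letter, available because $|\Sigma|\ge 3$. If $S=\{a,b\}$ with $a$ and $b$ each occurring at least twice, replace one $a$ by a fresh letter $d$; the resulting word $x'$ has three distinct letters and the same length, so the previous case gives $f(x')\in x'\Sigma^*$, and since the morphism $\varphi_{d,a}$ sending $d$ to $a$ (and fixing the other letters) maps $x'$ and $x$ to the same word, RCP gives that $\varphi_{d,a}(f(x))$ has $x$ as a prefix; this determines the first $m+1$ letters of $f(x)$ at the positions where $x$ has $b$ and confines them to $\{a,d\}$ where $x$ has $a$, and the symmetric argument with $b$ and $d$ then forces those letters to be exactly $x$. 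If $S=\{a,b\}$ with $a$ occurring exactly once, so $x=b^kab^\ell$ with $k+\ell=m\ge 2$, replacing one $b$ by $d$ similarly confines the letters of $f(x)$ at the $b$-positions of $x$ to $\{b,d\}$, and then $\psi_a(x)=b^m=\psi_a(b^m)$ together with $f(b^m)\in b^m\Sigma^*$ forces those letters to be $b$. Finally, if $S=\{a\}$, so $x=a^{m+1}$, apply the two-letter case to $a^mb$ and to $a^md$ for distinct fresh letters $b,d$, and use $\varphi_{b,a}$ and $\varphi_{d,a}$ to confine each of the first $m+1$ letters of $f(x)$ to $\{a,b\}$ and to $\{a,d\}$, hence to $\{a\}$.

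The genuinely delicate case is $x=ab$ (length two, both letters simple), because no word of length two has three distinct letters, so nothing at the same length is available to reduce to. Here I would use that a morphism $\Sigma^*\to\Sigma^*$ may send a letter to an arbitrary word: pick a fresh letter $c$ and let $\varphi$ fix every letter except $c$, sending $c$ to $ab$; then $\varphi(c)=ab=\varphi(ab)$, so RCP gives that $\varphi(f(ab))$ has $ab$ as a prefix, which puts the first letter of $f(ab)$ in $\{a,c\}$ and, if it is $a$, makes the second letter $b$; and $\psi_a(ab)=b=\psi_a(b)$ with the hypothesis $f(b)\in b\Sigma^*$ shows that the first non-$a$ letter of $f(ab)$ is $b$, which excludes $c$. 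The real work is keeping the dependency order of the cases acyclic; each individual step is just RCP applied to a carefully chosen morphism, plus elementary reasoning about prefixes.
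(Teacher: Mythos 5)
Your proof is correct, but it follows a genuinely different route from the paper's. The paper also inducts on $|x|$, but its inductive step peels off the \emph{last} letter and splits into two claims according to whether that letter equals the penultimate one: the crucial Claim~1 combines the morphism erasing $x_n$ (which only determines $f(x)$ up to insertions of $x_n$) with the letter-to-word morphism $b\mapsto bc$, $c\mapsto x_nbc$, and then pins down the exponents of $x_n$ by matching the two resulting factorizations ($k_i=\ell_i$, $k_p=\ell_p+1$); Claim~2 then handles a repeated last letter by intersecting the constraints from two identification morphisms. You instead stratify by the number of distinct letters of $x$: in the generic case ($\geq 3$ distinct letters) you delete each occurring letter $c$, apply the induction hypothesis to the shorter word $x_c$, and recover the prefix $x$ of $f(x)$ letter by letter via an inner induction, intersecting the constraints from two deletions $\psi_c,\psi_{c'}$ with $c,c'\neq a_i$; words with at most two distinct letters are lifted to the generic case at the same length by substituting a fresh letter, and the only irreducible configuration, $x=ab$ of length two, is dispatched with the letter-to-word morphism $c\mapsto ab$ together with $\psi_a$ and the hypothesis on letters. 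I checked the delicate points: the inner induction does force the $i$-th letter into $\{a_i,c\}\cap\{a_i,c'\}=\{a_i\}$ (and nonemptiness of the residual word comes for free from the prefix relations), the dependency order of your cases is acyclic (generic case first, then the two-letter subcases and $x=ab$, then $x=a^{m+1}$), and the $x=ab$ argument correctly excludes a first letter $c$ via the first-non-$a$-letter constraint. The trade-off: the paper's step is uniform but needs the more intricate exponent-matching computation with a length-increasing morphism; your version has more cases but each is elementary (letter-to-letter or erasing morphisms), and the use of a non-letter-to-letter morphism is confined to the single case $x=ab$. Both arguments use $|\Sigma|\geq 3$ in the same essential way, namely the availability of a third letter to intersect constraints.
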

\begin{proof}
We argue by induction on the length of $x$.
\\
{\it Base case $|x|=0$.} Condition $f(\varepsilon)\in \varepsilon\Sigma^*$ is trivial.
\\
{\it Base case $|x|=1$.} This is our assumption.
\\
{\it Inductive step: from $n\geq1$ to $n+1$.}
Assuming $f(x_1\cdots x_n)\in x_1\cdots x_n \Sigma^*$ for all
$x_1,\ldots,x_n\in\Sigma$, and letting $x_{n+1}=b$, 
we prove $f(x_1\cdots x_n b)\in x_1\cdots x_n b\Sigma^*$
for all $x_1,\ldots,x_n,b\in\Sigma$.
Claims 1 and 2 below respectively deal with the cases $b\neq x_n$ and $b=x_n$. 
\smallskip
\\
{\bf Claim 1.} If $b\neq x_n$ then $f(x_1\ldots x_{n-1}x_nb) \in x_1\ldots x_{n-1}x_nb\Sigma^*$.
\smallskip
\\
{\it Proof of Claim 1.} Let $x_1\cdots x_{n-1} = x_n^{\ell_0}  y_1  x_n^{\ell_1} \cdots y_p  x_n^{\ell_p}$
with $y_1,\ldots,y_p\in\Sigma\setminus\{x_n\}$ and $p,\ell_0,\ldots, \ell_p \in\N$.
If $b\neq x_n$ then we show that  there are $k_0,\ldots, k_p \in\N$ and $w\in \Sigma^*$ such that
\begin{equation}\label{eq:claim1}
f(x_1\cdots x_{n-1}x_nb)=f(x_n^{\ell_0}  y_1  x_n^{\ell_1} \cdots y_p  x_n^{\ell_p})
= x_n^{k_0} y_1  x_n^{k_1} \cdots y_{p-1}  x_n^{k_{p-1}} y_p  x_n^{k_p} b w.
\end{equation}
Indeed, since $|x_1\cdots x_{n-1}b|=n$ we have
$f(x_1\cdots x_{n-1}b) \in x_1\cdots x_{n-1}b \Sigma^*$ by  the induction hypothesis.
Thus, $f(x_1\cdots x_{n-1}b) = x_1\cdots x_{n-1}bu
 = x_n^{\ell_0}  y_1  x_n^{\ell_1} \cdots y_p  x_n^{\ell_p}  b u$ for some $u\in\Sigma^*$.
Let $\psi : \Sigma^*\to\Sigma^*$ be the morphism which erases $x_n$~:
$\psi(x_n)=\varepsilon$ and $\psi(y)=y$ for $y\in\Sigma\setminus\{x_n\}$. 
We have $\psi(x_1\ldots x_{n-1}x_nb) = \psi(x_1\ldots x_{n-1}b)$ 
hence 
$\psi(f(x_1\ldots x_{n-1}x_nb)) = \psi(f(x_1\ldots x_{n-1}b))
= \psi(x_n^{\ell_0}  y_1  x_n^{\ell_1} \cdots y_p  x_n^{\ell_p}  b u)
= y_1 y_2\dots y_p  b  \psi(u).$
Thus, $f(x_1\ldots x_{n-1}x_nb)$ is obtained by inserting some occurrences of $x_n$ 
in $y_1 y_2\cdots y_p  b  \psi(u)$, and hence equation \eqref{eq:claim1} holds.

As $|\Sigma|\geq 3$ there is $c$ such that $c\not\in\{b,x_n\}$. 
Let $ \varphi : \Sigma^*\to\Sigma^*$ be the morphism such that 
$ \varphi(b)=bc$,  $ \varphi(c)=x_nbc $  and $ \varphi(y)=y$ for $y\in\Sigma\setminus\{b,c\}$. 
We have $ \varphi(x_nb)= \varphi(c)$, and hence 
 $ \varphi(x_1\ldots x_{n-1} x_n b) =  \varphi(x_1\cdots x_{n-1}c)$.
Thus  $ \varphi(f(x_1\cdots x_{n-1}x_nb)) =  \varphi(f(x_1\cdots x_{n-1}c))$. 
\\
Applying $\varphi$ to equation \eqref{eq:claim1}
\begin{eqnarray}\notag 
\varphi(f(x_1\ldots x_{n-1}x_nb))
&= & \varphi(x_n^{k_0}  y_1   x_n^{k_1} \cdots y_{p-1}   x_n^{k_{p-1}}  y_p  x_n^{k_p}  b  w)\\ \label{eq:b}
&=&x_n^{k_0}   \varphi(y_1)  x_n^{k_1} \cdots \varphi(y_p)  x_n^{k_p}  bc   \varphi(w) 
\end{eqnarray}
As $|x_1\ldots x_{n-1}c|=n$,  by the induction hypothesis for length $n$, there is $u\in\Sigma^*$ such that
$$ f(x_1\ldots x_{n-1}c) = x_1\cdots x_{n-1}cu=x_n^{\ell_0}  y_1  x_n^{\ell_1} \cdots y_p  x_n^{\ell_p}  cu.$$
Hence
\begin{equation}\label{eq:c}
\varphi(f(x_1\ldots x_{n-1}c)) =  \varphi(x_n^{\ell_0}  y_1  x_n^{\ell_1} \cdots y_p  x_n^{\ell_p}  cu)
= x_n^{\ell_0}   \varphi(y_1)  x_n^{\ell_1} \cdots  \varphi(y_p)  x_n^{\ell_p}  x_nbc   \varphi(u)
\end{equation}
As $ \varphi(f(x_1\cdots x_{n-1}x_nb)) =  \varphi(f(x_1\ldots x_{n-1}c))$,
we infer from Equations \eqref{eq:b} and \eqref{eq:c}
$$x_n^{k_0}   \varphi(y_1)  x_n^{k_1}    \varphi(y_2)  x_n^{k_2}
\cdots \varphi(y_p)  x_n^{k_p}  bc   \varphi(w)
= x_n^{\ell_0}   \varphi(y_1)  x_n^{\ell_1}  \varphi(y_2)  x_n^{\ell_2}  \cdots 
 \varphi(y_p)  x_n^{\ell_p}  x_nbc   \varphi(u)
$$
Since $y_1\neq x_n$ the word $ \varphi(y_1)$ contains a letter distinct from $x_n$
and the last equality implies $k_0=\ell_0$. Thus
$x_n^{k_1}    \varphi(y_2)  x_n^{k_2} \cdots \varphi(y_p)  x_n^{k_p}  bc   \varphi(w)
=  x_n^{\ell_1}  \varphi(y_2)  x_n^{\ell_2}  \cdots 
 \varphi(y_p)  x_n^{\ell_p}  x_nbc   \varphi(u)$.
Iterating the previous argument we get $k_i=\ell_i$ for $i=0,\ldots, p-1$ and  
the residual equality
$x_n^{k_p}  bc   \varphi(w) =  x_n^{\ell_p}  x_nbc   \varphi(u)$
This last equality implies $k_p=\ell_p+1$.
Thus, 
\begin{eqnarray*}
f(x_1\ldots x_{n-1}x_nb)
&=& x_n^{k_0}  y_1   x_n^{k_1} \cdots y_{p-1}   x_n^{k_{p-1}}  y_p  x_n^{k_p}  b  w
\\
&=& x_n^{\ell_0}  y_1   x_n^{\ell_1} \cdots 
y_{p-1}   x_n^{\ell_{p-1}}  y_p  x_n^{\ell_p} x_n  b  w
\\
&=& x_1\cdots x_{n-1} x_n  b  w
\end{eqnarray*}
\smallskip
{\bf Claim 2.} {\it If $b=x_n$ then 
$f(x_1\ldots x_{n-1}bb) \in x_1\ldots x_{n-1}bb\Sigma^*$.}
\smallskip
\\
{\it Proof of Claim 2.}
Let $a\in\Sigma\setminus\{b\}$ 
and consider the morphism $\varphi_{a,b}$ which identifies $a$ with $b$~:
$\varphi_{a,b}(a)=b$ and $\varphi_{a,b}(x)=x$ for $x\in\Sigma\setminus\{a\}$. 
As $\varphi_{a,b}(x_1\ldots x_{n-1}bb) = \varphi_{a,b}(x_1\ldots x_{n-1}ba)$
we have 
$\varphi_{a,b}(f(x_1\ldots x_{n-1}bb)) = \varphi_{a,b}(f(x_1\ldots x_{n-1}ba))$.
As $a\neq b$, we can apply Claim~1:
$f(x_1\ldots x_{n-1}ba)
= x_1\ldots x_{n-1}ba w$ for some $w\in\Sigma^*$.
Thus,
\begin{eqnarray*}
\varphi_{a,b}(f(x_1\ldots x_{n-1}bb)) &=& \varphi_{a,b}(x_1\ldots x_{n-1}baw)
\ =\ \varphi_{a,b}(x_1\ldots x_{n-1})bb \varphi_{a,b}(w),\\
\text{and hence\qquad}
f(x_1\ldots x_{n-1}bb) &\in& 
\varphi_{a,b}^{-1}(\varphi_{a,b}(x_1\ldots x_{n-1})) \{aa,ab,ba,bb\}\Sigma^*
\end{eqnarray*}
Similarly, let $c\not\in\{a,b\}$.
Considering the morphism $\varphi_{c,b}$ 
which identifies $c$ with $b$~:
$\varphi_{c,b}(c)=b$ and $\varphi_{c,b}(x)=x$ for $x\in\Sigma\setminus\{a\}$,
we get
\begin{eqnarray*}
f(x_1\ldots x_{n-1}bb) &\in& 
\varphi_{c,b}^{-1}(\varphi_{c,b}(x_1\ldots x_{n-1})) \{cc,cb,bc,bb\}\Sigma^*
\end{eqnarray*}
Let $z=z_1\ldots z_{n-1}$ be the length $n-1$ prefix of $f(x_1\ldots x_{n-1}bb)$.
As $\varphi_{a,b}$ and  $\varphi_{c,b}$ preserve length,
$z\in \varphi_{a,b}^{-1}(\varphi_{a,b}(x_1\ldots x_{n-1}))
\cap\varphi_{c,b}^{-1}(\varphi_{c,b}(x_1\ldots x_{n-1}))$.
Since $z\in \varphi_{a,b}^{-1}(\varphi_{a,b}(x_1\ldots x_{n-1}))$ 
we have $z_i\in\{a,b\} \Leftrightarrow x_i\in\{a,b\}$, and $z_i\notin\{a,b\} \Rightarrow z_i=x_i$.
Similarly, $z\in \varphi_{c,b}^{-1}(\varphi_{c,b}(x_1\ldots x_{n-1}))$ implies that
 $z_i\in\{c,b\} \Leftrightarrow x_i\in\{c,b\}$ and $z_i\notin\{c,b\} \Rightarrow z_i=x_i$.
Thus,
\\- if $z_i=b$ then both $z_i$ and $x_i$ are in $\{a,b\}\cap\{c,b\}=\{b\}$, and $z_i=x_i=b$,
\\- if $z_i\neq b$ then either $z_i\notin\{a,b\}$ or $z_i\notin\{c,b\}$, and in both cases $z_i=x_i$.
\\
This proves that $z=x_1\ldots x_{n-1}$.

Finally, the two letters of $f(x_1\ldots x_{n-1}bb)$ which follow $z$ are in
$\{aa,ab,ba,bb\}\cap\{cc,cb,bc,bb\} =\{bb\}$.
Thus, $f(x_1\ldots x_{n-1}bb) \in x_1\ldots x_{n-1}bb\Sigma^*$ and Claim 2 is proved,
 finishing the proof of the Lemma.
\end{proof}

As a corollary of Lemmata \ref{l:f epsilon},   \ref{l:fa=bw} and  \ref{l:fx=xw}, we get

\begin{lemma}\label{Patrick} 
Assume $|\Sigma|\geq 3$ and $f\colon \Sigma^*\to\Sigma^*$ is RCP.  Exactly one of the below three conditions holds: 
\begin{enumerate}
\item [($C_1$)]
 either  there exists  $b\in\Sigma$ such that $f(x) \in b\Sigma^*$ for all  $x\in \Sigma^*$,
\item [($C_2$)] or $f(x) \in x\Sigma^*$ for all  $x\in \Sigma^*$,
\item [($C_3$)] or  $f$ is constant on $\Sigma^*$ with value $\varepsilon$. 
\end{enumerate}
 \end{lemma}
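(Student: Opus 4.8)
The plan is to deduce Lemma~\ref{Patrick} directly from the three preceding lemmas, so the work is almost entirely bookkeeping about which case applies. First I would look at the first letter of $f(a)$ for letters $a\in\Sigma$. There are two basic possibilities: either $f(a)\in a\Sigma^*$ for every $a\in\Sigma$, or there is some $a\in\Sigma$ for which this fails. In the first situation Lemma~\ref{l:fx=xw} immediately gives ($C_2$): $f(x)\in x\Sigma^*$ for all $x\in\Sigma^*$. In the second situation, fix $a\in\Sigma$ with $f(a)\notin a\Sigma^*$; then either $f(a)=\varepsilon$ or $f(a)\in b\Sigma^*$ for some $b\neq a$. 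If $f(a)=\varepsilon$ with $a\neq\varepsilon$, Lemma~\ref{l:f epsilon} yields ($C_3$). If $f(a)\in b\Sigma^*$ with $b\neq a$, Lemma~\ref{l:fa=bw} yields ($C_1$) with that particular $b$.

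The remaining issue is that the naive case split above is not exhaustive in the way I first stated it: the negation of ``$f(a)\in a\Sigma^*$ for all $a$'' could in principle be realized only by a letter $a$ with $f(a)=\varepsilon$, while for other letters $c$ one still has $f(c)\in c\Sigma^*$ and none of the three conditions has been forced. But this is not really a gap: if some $f(a)=\varepsilon$ with $a\neq\varepsilon$ then Lemma~\ref{l:f epsilon} already forces $f\equiv\varepsilon$, overriding any apparent behaviour on other letters. So the clean way to organize the argument is: if $f(u)=\varepsilon$ for some $u\neq\varepsilon$, conclude ($C_3$) and stop; otherwise $f(a)\neq\varepsilon$ for every $a\in\Sigma$, and now for each $a\in\Sigma$ the word $f(a)$ has a well-defined first letter, which is either $a$ or some $b\neq a$. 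If for some $a$ the first letter of $f(a)$ is some $b\neq a$, apply Lemma~\ref{l:fa=bw} to get ($C_1$). If no such $a$ exists, then $f(a)\in a\Sigma^*$ for all $a$, and Lemma~\ref{l:fx=xw} gives ($C_2$).

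Finally I would check the ``exactly one'' part. I claim ($C_1$), ($C_2$), ($C_3$) are pairwise incompatible whenever $|\Sigma|\geq2$ (a fortiori for $|\Sigma|\geq3$). Indeed ($C_3$) says $f$ is constant $\varepsilon$; under ($C_1$) we have $f(x)\in b\Sigma^*$, so $|f(x)|\geq1$ for all $x$, in particular $f\neq\varepsilon$ everywhere, so ($C_1$) and ($C_3$) cannot both hold. Likewise under ($C_2$), taking $x$ to be any nonempty word gives $f(x)\in x\Sigma^*$ with $|f(x)|\geq|x|\geq1$, so ($C_2$) excludes ($C_3$). For ($C_1$) versus ($C_2$): pick a letter $c\in\Sigma$ with $c\neq b$ (possible since $|\Sigma|\geq2$); ($C_2$) gives $f(c)\in c\Sigma^*$, hence $f(c)$ starts with $c$, while ($C_1$) gives $f(c)\in b\Sigma^*$, hence $f(c)$ starts with $b\neq c$ — note $f(c)\neq\varepsilon$ in the ($C_1$) case as just observed, so it genuinely has a first letter. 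This contradiction shows ($C_1$) and ($C_2$) are incompatible, completing the proof.

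\medskip

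I do not expect any serious obstacle here: the three lemmas do all the heavy lifting, and the only subtlety is to funnel the degenerate ``$f$ hits $\varepsilon$'' behaviour through Lemma~\ref{l:f epsilon} \emph{before} attempting the first-letter dichotomy, so that every remaining $f(a)$ has a first letter to talk about.
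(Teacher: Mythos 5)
Your proof is correct and follows exactly the route the paper intends: the paper states this lemma as an immediate corollary of Lemmata~\ref{l:f epsilon}, \ref{l:fa=bw} and \ref{l:fx=xw} without writing out the case analysis, and your organization (dispose of the ``$f$ hits $\varepsilon$ on a nonempty word'' case first, then run the first-letter dichotomy on letters, plus the easy pairwise-exclusivity check) is precisely that intended derivation.
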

 The proof of Theorem \ref{Carac_par_Poly RCP} relies on the property of RCP functions which is stated in the next Lemma \ref{CAS_PatrickBis}.
\begin{lemma}\label{CAS_PatrickBis} Assume $|\Sigma|\geq 3$ and $f\colon \Sigma^*\to\Sigma^*$ is RCP and such that either $f(x)=ag(x)$ for all $x\in \Sigma^*$, or  $f(x)=x g(x)$  for all $x\in \Sigma^*$.  
Then $g$ is also RCP.
  \end{lemma}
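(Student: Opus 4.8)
The plan is to unwind the definitions and reduce everything to left-cancellativity of the free monoid $\Sigma^*$. Fix an arbitrary restricted congruence $\sim$ on $\Sigma^*$; by definition it is the kernel of some morphism $\varphi\colon\Sigma^*\to\Sigma^*$, so that $u\sim v$ is equivalent to $\varphi(u)=\varphi(v)$. I must show that $x\sim y$ implies $g(x)\sim g(y)$. First note that $g$ is a genuine function $\Sigma^*\to\Sigma^*$: under the hypothesis $f(x)=ag(x)$, the word $g(x)$ is simply $f(x)$ with its first letter deleted, and under $f(x)=xg(x)$ it is $f(x)$ with the prefix $x$ (which is a prefix by hypothesis) deleted; in both cases $g(x)$ is uniquely determined.

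Now suppose $x\sim y$, i.e.\ $\varphi(x)=\varphi(y)$. Since $f$ is RCP we get $f(x)\sim f(y)$, that is $\varphi(f(x))=\varphi(f(y))$. Expanding $f$ according to the hypothesis and using that $\varphi$ is a morphism: in the case $f(z)=ag(z)$ this reads $\varphi(a)\,\varphi(g(x))=\varphi(a)\,\varphi(g(y))$, and cancelling the common left factor $\varphi(a)$ yields $\varphi(g(x))=\varphi(g(y))$; in the case $f(z)=zg(z)$ it reads $\varphi(x)\,\varphi(g(x))=\varphi(y)\,\varphi(g(y))$, and since $\varphi(x)=\varphi(y)$ we may cancel this common left factor to obtain again $\varphi(g(x))=\varphi(g(y))$. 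Either way $g(x)\sim g(y)$. As $\sim$ was an arbitrary restricted congruence, $g$ is RCP.

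The only thing to watch is that no deeper structure is needed: the assumption $|\Sigma|\geq 3$ and the case analysis of Lemma~\ref{Patrick} are precisely what deliver the normal forms $f(x)=ag(x)$ or $f(x)=xg(x)$ in the first place, but once such a form is available, the RCP-ness of $g$ is a one-line consequence of cancellation in a free monoid, requiring no further hypotheses. Hence I do not anticipate a genuine obstacle in this lemma; its entire weight rests on the earlier structural results that justify its hypotheses, and the statement here is essentially the bookkeeping step that lets an induction on the peeled-off word proceed in the proof of Theorem~\ref{Carac_par_Poly RCP}.
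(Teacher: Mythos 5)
Your proof is correct and follows exactly the paper's argument: given a morphism $\varphi$ whose kernel is the restricted congruence, use RCP of $f$ to get $\varphi(f(x))=\varphi(f(y))$, expand via the hypothesis, and cancel the common left factor ($\varphi(a)$ or $\varphi(x)=\varphi(y)$) by left-cancellativity of the free monoid. Nothing further to add.
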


\begin{proof} For $\varphi\colon \Sigma^*\to  \Sigma^*$ a  morphism, $\varphi(x)=\varphi(y)$ implies $\varphi(f(x))=\varphi(f(y))$.
If $f(x)=ag(x)$  for all $z\in\Sigma^*$ then  $\varphi(a)\varphi(g(x))=\varphi(a)\varphi(g(y))$. Cancelling the common prefix  $\varphi(a)$, we get  $\varphi(g(x))=\varphi(g(y))$. Similarly, if $f(x)=xg(x)$  for all $z\in\Sigma^*$, we conclude by cancelling the common prefix  $\varphi(x)$.
\end{proof}


\begin{proof} [Proof of  Theorem \ref{Carac_par_Poly RCP}] 
The sufficient condition follows from Lemma \ref{poly=>CP}.\\
We now prove the necessary condition:
\begin{equation}\label{nec-cn-RCP}
\text{ if $f$ is RCP then it is of the form $f(x)=w_0xw_1x\cdots w_{n-1}xw_n$.}
\end{equation}
We argue by induction on  $p_f +e_f$, where $p_f ,e_f$ are defined in Lemma \ref{l:length}. 

\noindent 
{\textbullet} {\it Basis:} If $p_f +e_f=0$, then $p_f =e_f=0$ and $f(x)=\varepsilon$ which is of the required form with $n=0$  and
$w_0=\varepsilon$.

\noindent 
{\textbullet} {\it Induction:} Assume that condition \eqref{nec-cn-RCP}  holds for every function $h$ such that $p_h+e_h \leq n$
and let $f$ be RCP with $p_f+e_f = n+1$. We first note that, as $p_f +e_f\geq 1$, $f$ cannot be the constant function with value $\varepsilon$. 
Hence, by Lemma \ref{Patrick}, $f$ is of the form 
$f(x)=ag(x)$ or $f(x)=xg(x)$. Moreover, by Lemma~\ref{CAS_PatrickBis}, $g$ is RCP. \\
-- If $f(x)=ag(x)$ for all $x\in\Sigma^*$ then $|f(x)|=1+|g(x)|$, 
 $p_f=p_g$ and $e_f=e_g+1$.\\
-- If $f(x)=xg(x)$ for all $x\in\Sigma^*$ then $|f(x)|=|x|+|g(x)|$, 
 $p_f=p_g+1$ and $e_f=e_g$.\\
Hence in both cases
$p_g+e_g=p_f +e_f-1$. Thus,  by the induction hypothesis, $g$ is of the required form and so is $f$.
\end{proof}
%
%

\section{Non unary congruence preserving functions on  free monoids with at least three generators}
In the present Section  we   extend Theorem  \ref{Carac_par_Poly} and  characterize CP functions $f\colon (\Sigma^*)^k\to\Sigma^*$ of arbitrary arity $k$ (cf. Theorem \ref{t:main}).
To this end we use the notion of {\em RCP $k$-ary function}.
The idea of the proof is similar to the idea of the proof of Theorem  \ref{Carac_par_Poly}.

\begin{definition} 
A function $f\colon (\Sigma^*)^k\to\Sigma^*$ preserving restricted congruences is said to be {\em RCP}.
\end{definition}


\begin{lemma}\label{CPvslengthK} 
Let $k\geq1$, $f : (\Sigma^*)^k \to \Sigma^*$ be RCP,
$u_1,\ldots, u_k, v_1,\dots, v_k \in\Sigma^*$,  $a,b\in\Sigma$
and $n_1,\ldots,n_k\in\N$.
\\
1. $ |u_i|=|v_i|$ for all $i\in\{1,\ldots,k\}$
implies $|f(u_1,\ldots,u_k)|=|f(v_1,\ldots,v_k)|$.
\\
2. $ |u_i|_a=|v_i|_a$ for all $i\in\{1,\ldots,k\}$
implies $|f(u_1,\ldots,u_k)|_a=|f(v_1,\ldots,v_k)|_a$.
\\
3. $b\neq a$ implies $|f(a^{n_1},\ldots, a^{n_k})|_b=|f(\varepsilon,\ldots,\varepsilon)|_b$.
\end{lemma}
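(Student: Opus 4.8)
The plan is to mimic exactly the structure of the proof of Lemma~\ref{CPvslength}, the only novelty being that we now have $k$ arguments to handle simultaneously rather than one. For each of the three statements I would exhibit an appropriate restricted congruence (i.e. the kernel of a morphism $\Sigma^*\to\Sigma^*$) under which the two $k$-tuples in question are componentwise equivalent, and then invoke the hypothesis that $f$ is RCP.

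For part~1, let $\varphi\colon\Sigma^*\to\Sigma^*$ be the morphism sending every letter of $\Sigma$ to a fixed letter $a$, so that $\kernel(\varphi)$ is the length congruence: $x\mathrel{\kernel(\varphi)}y$ iff $|x|=|y|$. If $|u_i|=|v_i|$ for all $i$, then $u_i\mathrel{\kernel(\varphi)}v_i$ for each $i$, and since $f$ preserves this restricted congruence we get $f(u_1,\ldots,u_k)\mathrel{\kernel(\varphi)}f(v_1,\ldots,v_k)$, i.e. $|f(u_1,\ldots,u_k)|=|f(v_1,\ldots,v_k)|$. For part~2, replace $\varphi$ by the morphism $\theta_a$ of the Notation before Lemma~\ref{CPvslength} (sending $a$ to $a$ and every other letter to $\varepsilon$), whose kernel is $\sim_a$, so that $u_i\sim_a v_i$ iff $|u_i|_a=|v_i|_a$; the same one-line argument then gives $|f(u_1,\ldots,u_k)|_a=|f(v_1,\ldots,v_k)|_a$. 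For part~3, observe that when $b\neq a$ we have $|a^{n_i}|_b=0=|\varepsilon|_b$ for every $i$, so $a^{n_i}\sim_b\varepsilon$ for all $i$; applying part~2 with the letter $b$ in place of $a$ to the tuples $(a^{n_1},\ldots,a^{n_k})$ and $(\varepsilon,\ldots,\varepsilon)$ yields $|f(a^{n_1},\ldots,a^{n_k})|_b=|f(\varepsilon,\ldots,\varepsilon)|_b$.

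There is essentially no obstacle here: the statement is the evident $k$-ary generalization of Lemma~\ref{CPvslength}, and the proof of that lemma already treats each argument independently, so nothing breaks when there are several. The only point requiring a word of care is that the morphisms used ($x\mapsto a$ for all $x$, and $\theta_a$, and $\theta_b$) are genuinely morphisms $\Sigma^*\to\Sigma^*$, hence their kernels are \emph{restricted} congruences in the sense of the Definition preceding the Example, so that RCP (and not merely CP) is enough to conclude. I would present the proof as a single short paragraph invoking these three morphisms, exactly parallel to the unary case.
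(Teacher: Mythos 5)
Your proof is correct and is exactly what the paper intends: the paper's own proof of this lemma is simply "Similar to the proof of Lemma~\ref{CPvslength}", and your argument spells out that unary proof componentwise with the same three morphisms (the all-to-$a$ morphism, $\theta_a$, and $\theta_b$ via part 2). Nothing further is needed.
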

\begin{proof} Similar to the proof of Lemma \ref{CPvslength}.
%
%
\end{proof}
Thanks to Lemma \ref{CPvslengthK}  the following notations make sense.

\begin{notation}\label{not:ell}
Let $k\geq1$. Denote by $\vec{0}$  the $k$-tuple $(0,\ldots,0)$ and by 
$\vec{e_1},\ldots, \vec{e_k}$ respectively the $k$-tuples $(1,0,\ldots,0)$, \ldots, $(0,\ldots,0,1)$.

 If $k\geq1$, $f : (\Sigma^*)^k \to \Sigma^*$ is RCP and $|x_i|=n_i$ for $i=1,\ldots,k$, let
$$\begin{array}{rcl}
\ell(n_1,\ldots,n_k) &=&\!\!\!\!  \text{common value of $|f(x_1,\ldots,x_k)|$ with  
$(x_1,\ldots, x_k)\in\Sigma^{n_1}\!\times\cdots\times\Sigma^{n_k}$}
\\
\ell_a(n_1,\ldots,n_k) &=&\!\! \!\!  \text{common value of $|f(x_1,\ldots,x_k)|_a$ with  
$|x_1|_a=n_1, \dots, |x_k|_a=n_k$}
\\
\Delta\ell(n_1,\ldots,n_k) &=& \ell(n_1,\ldots,n_k) - \ell(\vec{0}) =  \ell(n_1,\ldots,n_k)-|f(\varepsilon,\ldots,\varepsilon)|
\\
\Delta\ell_a(n_1,\ldots,n_k) &=& \ell_a(n_1,\ldots,n_k) - \ell_a(\vec{0})  
\end{array}$$
\end{notation}

\begin{lemma}\label{l:lengthK}   
If $\Sigma$ contains at least two letters
and $f : (\Sigma^*)^k \to \Sigma^*$ is RCP
then there exist $p_{f,1}, \ldots,p_{f,k},e_f$ in $\N$ such that
\begin{eqnarray}
\label{eq:length-f} |f(x_1,\ldots,x_k)| &=& p_{f,1} |x_1| + \cdots + p_{f,k} |x_k| + e_f\\ 
\label{eq:length-f_a} \text{for all $a\in\Sigma$\qquad}
|f(x_1,\ldots,x_k)|_a &=& p_{f,1} |x_1|_a + \cdots + p_{f,k} |x_k|_a 
+ |f(\varepsilon,\ldots,\varepsilon)|_a\qquad\qquad
\end{eqnarray}
with
$p_{f,i}=\ell(\vec{e_i}) - \ell(\vec{0})
= |f(\overbrace{\varepsilon,\ldots,\varepsilon}^{\text{$(i-1)$ times}}, a, \varepsilon,\ldots,\varepsilon)|
 - |f(\varepsilon,\ldots,\varepsilon)|$ and $e_f=|f(\varepsilon,\ldots,\varepsilon)|$.
 
\end{lemma}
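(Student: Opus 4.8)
The plan is to reduce the $k$-ary statement to the unary case already handled in Lemma~\ref{l:length}. The key observation is that, since $f$ is RCP, we may freeze all but one argument by plugging in a fixed letter, and what remains is a unary RCP function to which Lemma~\ref{l:length} applies. More precisely, fix $a\in\Sigma$ and, for each $i$, consider the unary function $x\mapsto f(a^{n_1},\ldots,a^{n_{i-1}},x,a^{n_{i+1}},\ldots,a^{n_k})$; this is RCP because composing $f$ with morphisms that fix the frozen coordinates still yields implications between morphism-equalities. So each partial function has affine length, and the increments $\ell(\vec e_i)-\ell(\vec 0)$ are well defined; call them $p_{f,i}$ and set $e_f=|f(\varepsilon,\ldots,\varepsilon)|$.

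First I would establish the additivity of $\ell$ across coordinates: I claim $\ell(n_1,\ldots,n_k)-\ell(\vec 0)=\sum_{i=1}^k\bigl(\ell(n_1\vec e_1+\cdots)\bigr)$, i.e.\ that $\Delta\ell$ is additive in the sense $\Delta\ell(n_1,\ldots,n_k)=\sum_i\Delta\ell(n_i\vec e_i)$. The way to get this is exactly the computation in the proof of Lemma~\ref{l:length}: write $|z|=\sum_{\alpha\in\Sigma}|z|_\alpha$, apply it to $f$ evaluated at a tuple of powers of a single letter $a$, use Lemma~\ref{CPvslengthK}-3 to freeze the $|\cdot|_b$ components for $b\neq a$ at their value on $(\varepsilon,\ldots,\varepsilon)$, and deduce the analogue of equation~\eqref{ln-l0}, namely $\Delta\ell=\Delta\ell_a$ for every $a$. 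Then, running the induction of Lemma~\ref{l:length} coordinate by coordinate — replacing one coordinate $a^{n_i}$ by $a^{n_i-1}b$ for a second letter $b$ and peeling off one occurrence at a time — gives $\ell(n_1,\ldots,n_k)=\sum_i p_{f,i}n_i+e_f$, which is~\eqref{eq:length-f} restricted to tuples of powers. Since by Lemma~\ref{CPvslengthK}-1 the value $|f(x_1,\ldots,x_k)|$ depends only on the lengths $|x_i|$, this is~\eqref{eq:length-f} in full.

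For~\eqref{eq:length-f_a} I would argue similarly: by Lemma~\ref{CPvslengthK}-2, $|f(x_1,\ldots,x_k)|_a$ depends only on $|x_1|_a,\ldots,|x_k|_a$, so it suffices to treat tuples $(a^{n_1},\ldots,a^{n_k})$, and there $\ell_a(n_1,\ldots,n_k)=\Delta\ell_a(n_1,\ldots,n_k)+\ell_a(\vec 0)=\Delta\ell(n_1,\ldots,n_k)+|f(\varepsilon,\ldots,\varepsilon)|_a=\sum_i p_{f,i}n_i+|f(\varepsilon,\ldots,\varepsilon)|_a$, using the identity $\Delta\ell=\Delta\ell_a$ established above. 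Finally I would check nonnegativity of all the constants: $e_f=|f(\varepsilon,\ldots,\varepsilon)|\geq0$ trivially, and $p_{f,i}=\ell(\vec e_i)-\ell(\vec 0)\geq0$ because $|f(\varepsilon,\ldots,\varepsilon)|$ and $|f(\varepsilon,\ldots,a,\ldots,\varepsilon)|$ are related by the partial unary RCP function, which has nonnegative slope by Lemma~\ref{l:length} (a negative slope would eventually force negative length).

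The main obstacle is purely bookkeeping: one must make sure the freezing argument legitimately produces an RCP unary function and that the coordinate-by-coordinate induction telescopes correctly, i.e.\ that peeling one letter off coordinate $i$ changes $\ell$ by exactly $p_{f,i}$ independently of the other coordinates. This independence is precisely what the identity $\Delta\ell=\Delta\ell_a$ (for all $a$ simultaneously) buys us, so the real content is repeating the Lemma~\ref{l:length} computation with extra inert coordinates carried along; no new idea beyond Section~3 is needed.
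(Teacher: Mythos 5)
Your proposal is correct and follows essentially the same route as the paper: establish $\Delta\ell=\Delta\ell_a$ via Lemma~\ref{CPvslengthK}-3, peel off one letter of one coordinate at a time using a tuple of the form $(a^{n_1-1}b,a^{n_2},\ldots,a^{n_k})$ to get $\Delta\ell(n_1,\ldots,n_k)=\sum_i n_i\Delta\ell(\vec e_i)$, and then invoke Lemma~\ref{CPvslengthK}-1,2 to pass from tuples of powers of a single letter to arbitrary words. The only cosmetic difference is your explicit framing via frozen unary RCP functions and the nonnegativity check, which the paper leaves implicit.
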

\begin{proof} 
Observe that $|z| = \sum_{c\in \Sigma} |z|_c$. 
Using Notation~\ref{not:ell}, we have
\begin{eqnarray}\notag
\ell(n_1,\ldots,n_k) &=& |f(a^{n_1},\ldots, a^{n_k})|
\ =\ \sum_{b\in \Sigma} |f(a^{n_1},\ldots, a^{n_k})|_b 
\\\notag
&=& |f(a^{n_1},\ldots, a^{n_k})|_a 
+ \sum_{b\in \Sigma\setminus\{a\}} |f(\varepsilon,\ldots, \varepsilon)|_b
\quad{\text{ by Lemma \ref{CPvslengthK}-3)}}
\\\notag
&=& \big(|f(a^{n_1},\ldots, a^{n_k})|_a - |f(\varepsilon,\ldots, \varepsilon)|_a\big)
+ \sum_{b\in \Sigma} |f(\varepsilon,\ldots, \varepsilon)|_b
\\\notag
&=& \big(|f(a^{n_1},\ldots, a^{n_k})|_a - |f(\varepsilon,\ldots, \varepsilon)|_a\big)
+ |f(\varepsilon,\ldots, \varepsilon)|
\\\notag
&=& \ell_a(n_1,\ldots,n_k) -  \ell_a(\vec{0}) + \ell(\vec{0})
\quad{\text{ hence}}
\\\label{eq:Delta ell = Delta ellea}
\Delta\ell(n_1,\ldots,n_k) &=& \Delta \ell_a(n_1,\ldots,n_k) 
\end{eqnarray}
Now, let  $\vec x=(x_1,x_2,\ldots,x_k) = (a^{n_1-1}b, a^{n_2},\ldots, a^{n_k})$ 
with $a\neq b$ (possible as $\Sigma$ has at least two letters).
Then
\begin{eqnarray}\notag
\ell(n_1,\ldots,n_k) &=& |f(a^{n_1-1} b, a^{n_2},\ldots, a^{n_k})|  
\\\notag
&=& |f(\vec x)|_a + |f(\vec x)|_b + \sum_{c\neq a,b} |f(\vec x)|_c
\\\notag
&=& \ell_a(n_1-1,n_2, \ldots,n_k) + \ell_b(\vec{e_1}) +  \sum_{c\neq a,b} \ell_c(\vec{0}) 
\\\notag
&=& \Delta\ell_a(n_1-1,n_2, \ldots,n_k) + \Delta\ell_b(\vec{e_1})
+ \sum_{c\in\Sigma} \ell_c(\vec{0}) 
\\\notag
&=& \Delta\ell_a(n_1-1,n_2, \ldots,n_k) + \Delta\ell_b(\vec{e_1}) + \ell(\vec{0}) 
\\\notag
\text{hence:}\qquad\qquad \Delta\ell(n_1,\ldots,n_k)
&=& \Delta\ell_a(n_1-1,n_2, \ldots,n_k) + \Delta\ell_b(\vec{e_1}) 
\\\notag
\text{Using \eqref{eq:Delta ell = Delta ellea}, \ \ }\quad\quad \Delta\ell(n_1,\ldots,n_k)
&=& \Delta\ell(n_1-1,n_2, \ldots,n_k) + \Delta\ell(\vec{e_1}) 
\\\label{eq:n_1}
\text{Iterating, \quad \ } \quad\quad \Delta\ell(n_1,\ldots,n_k)
&=&  \Delta\ell(0,n_2, \ldots,n_k) + n_1 \Delta\ell(\vec{e_1})
\\\label{eq:n_2}
\text{Similarly,\ \ }\quad\quad  \Delta\ell(0,n_2, \ldots,n_k)
&=&  \Delta\ell(0,0,n_3, \ldots,n_k) + n_2  \Delta\ell(\vec{e_2}) 
\\\notag
&\vdots&
\\\label{eq:n_k}
\Delta\ell(0,0, \ldots,0,n_k)
&=&  \Delta\ell(\vec{0}) + n_k  \Delta\ell(\vec{e_k})
\ =\ n_k  \Delta\ell(\vec{e_k})
\\\notag
\text{Summing lines \eqref{eq:n_1} to \eqref{eq:n_k} gives :\ \ }&&
\\\label{eq:Delta ell}
 \Delta\ell(n_1,\ldots,n_k)
&=&{\sum_{i=1}^{i=k} }n_i \Delta\ell(\vec{e_i}) 
\end{eqnarray}
Equality \eqref{eq:Delta ell} together with Lemma \ref{CPvslengthK} implies equation \eqref{eq:length-f}. Equation 
 \eqref{eq:length-f_a} then follows from equation \eqref{eq:Delta ell = Delta ellea}.
\end{proof}

We use the characterization of unary CP functions $f\colon \Sigma^*\to\Sigma^*$ given in  Theorem \ref{Carac_par_Poly} to  characterize  $k$-ary CP functions $f\colon (\Sigma^*)^k\to\Sigma^*$ in Theorem \ref{t:main}. The key result for proving this characterization is Lemma \ref{PatrickInd}, which extends Lemma \ref{Patrick} to arity $k\geq 2$. 

\begin{lemma}\label{PatrickInd} 
Assume $|\Sigma|\geq 3$ and let  $f\colon (\Sigma^*)^k\to\Sigma^*$ be RCP.  Exactly one of the below three conditions holds 
\begin{enumerate}
\item [($C_1$)]
either there exists  $b\in\Sigma$ such that $f(x_1,\dots,x_k) \in b\Sigma^*$ for all  $x_1,\dots,x_k\in \Sigma^*$,
\item  [($C_2$)]
or there exists $i\in\{1,\ldots, k\}$ such that $f(x_1,\dots,x_k) \in x_i\Sigma^*$ for all  $x_1,\dots,x_k\in \Sigma^*$,
\item  [($C_3$)] or $f(x_1,\dots,x_k)$ is  constant with value $\varepsilon$ for all $x_1,\dots,x_k$.
 \end{enumerate}
 \end{lemma}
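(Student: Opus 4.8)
The plan is to reduce the $k$-ary statement to the unary case (Lemma \ref{Patrick}) by freezing all but one argument and using RCP-closure under substitution of constants. First I would observe that the three conditions are mutually exclusive: if ($C_3$) holds then $f(\varepsilon,\dots,\varepsilon)=\varepsilon$, contradicting ($C_1$) and ($C_2$) which both force $|f(x_1,\dots,x_k)|\geq1$; and ($C_1$) and ($C_2$) cannot both hold since ($C_2$) with, say, $x_i$ ranging over all letters would force the first letter of $f$ to vary. So the real content is that at least one of the three holds.

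Next I would analyze the first-letter behavior. For each fixed choice of letters $(a_1,\dots,a_k)\in\Sigma^k$, the word $f(a_1,\dots,a_k)$ is either empty or starts with some letter. If $f$ is the constant $\varepsilon$ we are in ($C_3$) by Lemma \ref{l:f epsilon} applied to any of the coordinate restrictions (or directly: equation \eqref{eq:length-f} forces all $p_{f,i}=e_f=0$ once $f$ vanishes somewhere). Otherwise, by Lemma \ref{l:lengthK}, $p_{f,i}\neq0$ for at least one $i$ or $e_f\neq0$, and in either case $f$ is never $\varepsilon$. Now freeze coordinates: for each $i$ and each tuple $\vec{c}=(c_1,\dots,c_{i-1},c_{i+1},\dots,c_k)\in\Sigma^{k-1}$, define the unary map $g_{i,\vec{c}}(x)=f(c_1,\dots,c_{i-1},x,c_{i+1},\dots,c_k)$. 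Substituting constants into an RCP function yields an RCP function (the morphism argument of Lemma \ref{CAS_PatrickBis} goes through verbatim, since a morphism $\varphi$ with $\varphi(x)=\varphi(y)$ also satisfies $\varphi(f(\vec{c},x,\vec{c}'))=\varphi(f(\vec{c},y,\vec{c}'))$), so each $g_{i,\vec{c}}$ is unary RCP and Lemma \ref{Patrick} applies to it. Since $f$ is not constant $\varepsilon$, no $g_{i,\vec{c}}$ is constant $\varepsilon$ (that would force $p_{f,i}=0$ and, varying $\vec{c}$, ultimately $e_f=0$ and all $p_{f,j}=0$ — one should check this bookkeeping via \eqref{eq:length-f}), so each $g_{i,\vec{c}}$ satisfies ($C_1$) or ($C_2$) of Lemma \ref{Patrick}: either there is a letter $b_{i,\vec{c}}$ with $g_{i,\vec{c}}(x)\in b_{i,\vec{c}}\Sigma^*$ for all $x$, or $g_{i,\vec{c}}(x)\in x\Sigma^*$ for all $x$.

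The hard part will be globalizing: showing that the ``type'' of $g_{i,\vec{c}}$ (a fixed first letter $b$, versus the diagonal behavior $x\mapsto x\cdots$) is independent of $i$ and of $\vec{c}$ in a coherent way, and that it pins down one of ($C_1$), ($C_2$), ($C_3$) for $f$ itself. I would argue as follows. Suppose first that for some coordinate $i$ and some $\vec{c}$ the map $g_{i,\vec{c}}$ is of diagonal type, $g_{i,\vec{c}}(x)\in x\Sigma^*$. Then letting $x$ range over the three distinct letters and applying the identifying morphisms $\varphi_{d,c}$ exactly as in Lemma \ref{l:fa=bw}, I would propagate this to show that for every $\vec{c}'$ the map $g_{i,\vec{c}'}$ is diagonal, and then that fixing the $i$-th argument to a letter and varying the others produces a map whose first letter equals that fixed letter — i.e. changing any $c_j$, $j\neq i$, does not affect the first letter of $f$. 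Combined with a length/induction argument in the spirit of Lemma \ref{l:fx=xw}, this gives ($C_2$) for $f$ with this $i$. If instead every $g_{i,\vec{c}}$ (for all $i$, all $\vec{c}$) is of fixed-first-letter type, I would show the letter $b=b_{i,\vec{c}}$ is the same for all $i,\vec{c}$: using $\varphi_{c_j,c_j'}$ to move one frozen coordinate shows $b_{i,\vec{c}}=b_{i,\vec{c}'}$, and using the two-coordinate compatibility $g_{i,\vec{c}}$ at argument $c_i'$ equals $g_{j,\vec{c}'}$ at argument $c_j'$ for appropriate tuples forces $b_{i,\cdot}=b_{j,\cdot}$. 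Then $f(a_1,\dots,a_k)\in b\Sigma^*$ for all letters, and the length-induction argument (again mimicking the case analysis of Lemma \ref{l:fa=bw}, Cases 1–4, now carried out in a single coordinate while the others are held as constants or handled by erasing morphisms) upgrades this to all words, giving ($C_1$). The delicate bookkeeping — ruling out a ``mixed'' situation where $g_{i,\vec{c}}$ is diagonal for some $\vec{c}$ but fixed-first-letter for another — is exactly where one invokes $|\Sigma|\geq3$ and the identifying/erasing morphisms to show the type cannot jump; this is the main obstacle, and it is resolved by the same three-letter interpolation trick that drives Lemmata \ref{l:fa=bw} and \ref{l:fx=xw}.
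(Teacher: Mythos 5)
Your overall strategy is the same as the paper's: freeze arguments to reduce to the unary trichotomy of Lemma \ref{Patrick}, dispose of the constant-$\varepsilon$ case via the length formula (essentially Lemma \ref{l:f epsilon:n}), and then globalize the first-letter behaviour using the identifying and erasing morphisms, with $|\Sigma|\geq3$ doing the work. The easy parts (mutual exclusivity, RCP-ness of the frozen functions, exclusion of constant-$\varepsilon$ slices) are fine. But the step you yourself call ``the main obstacle'' --- ruling out mixed behaviour, i.e.\ that the type of the slice (fixed first letter $b$ versus diagonal at some coordinate $i$) could change as the frozen tuple varies, or that the diagonal coordinate could jump from $i$ to $j\neq i$ --- is exactly the mathematical content of the lemma beyond the unary case, and you do not prove it; you assert that ``the same three-letter interpolation trick'' resolves it. In the paper this is where the real work sits: a separate induction on the arity $k$ (so that the trichotomy is available for the $(k-1)$-ary restrictions, not just for unary slices), Lemma \ref{cas:aSigma*n} propagating a fixed first letter from a single witness tuple, and above all Lemma \ref{cas:zSigma*:n}, whose proof is a delicate case analysis (choosing a letter distinct from both $b$ and the first letter of the nonempty witness word $y$, applying the trichotomy to auxiliary restrictions such as $f^{\vec a}_1$ and $f^{a}_{1\ldots(i-1)(i+1)\ldots k}$, and deriving contradictions in every case except the desired one). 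None of that case analysis appears in your proposal, and it is not a routine transcription of Lemmata \ref{l:fa=bw} and \ref{l:fx=xw}: those lemmas propagate a first-letter property of a \emph{single} unary function along words, whereas here one must show coherence \emph{across different frozen tuples and different coordinates}, which needs new arguments (and, in the paper, the nonemptiness of the witness $y$ plays an essential role in excluding the bad cases).

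A secondary point: your bookkeeping sentence ``equation \eqref{eq:length-f} forces all $p_{f,i}=e_f=0$ once $f$ vanishes somewhere'' is too strong as stated --- the vanishing tuple must have all coordinates nonempty (this is precisely Lemma \ref{l:f epsilon:n}); with frozen letter tuples this is harmless, but it should be said correctly. So the proposal is a correct plan in outline, but as written it has a genuine gap: the globalization/coherence step, corresponding to the paper's Lemma \ref{cas:zSigma*:n} and the induction on arity in the proof of Lemma \ref{PatrickInd}, is claimed rather than proved.
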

 Before proving Lemma \ref{PatrickInd}, we first show how  Lemma \ref{PatrickInd} together with Lemma \ref{l:lengthK}  entails  the following characterization of $k$-ary CP functions, extending Theorem \ref{Carac_par_Poly}. 
\begin{theorem}\label{t:main} Let   $|\Sigma|\geq 3$.  A function $f\colon (\Sigma^*)^k\to\Sigma^*$  is CP if and only if 
there exist $n\in\N$, $w_0,\ldots,w_n\in\Sigma^*$ and $p_1,\ldots,p_n\in\N$ such that for all
 $x_1,\ldots,x_k\in\Sigma^*$,
$f(x_1,\ldots,x_k)=w_0x_{i_1}^{p_1}w_1x_{i_2}^{p_2}w_2\cdots x_{i_n}^{p_n}w_n$,
where $x_{i_j}\in\{x_1,\ldots,x_k\}$ for $j=1,\ldots,n$.\end{theorem}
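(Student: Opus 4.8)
The plan is to prove the necessary condition by induction on the quantity $p_{f,1}+\cdots+p_{f,k}+e_f$ furnished by Lemma~\ref{l:lengthK}, mimicking exactly the proof of Theorem~\ref{Carac_par_Poly RCP} but using Lemma~\ref{PatrickInd} in place of Lemma~\ref{Patrick}. The sufficient condition is an immediate consequence of Lemma~\ref{poly=>CP} (applied repeatedly, one variable at a time, or by the same one-line morphism computation), so the content is all in the converse: assuming $f$ is CP — hence RCP — I must produce the claimed term-with-parameters representation.

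\medskip

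First I would dispose of the base case: if $p_{f,1}+\cdots+p_{f,k}+e_f=0$ then all $p_{f,i}$ and $e_f$ vanish, so by equation~\eqref{eq:length-f} of Lemma~\ref{l:lengthK} we have $|f(x_1,\ldots,x_k)|=0$ identically, i.e.\ $f$ is constant with value $\varepsilon$, which is of the required form with $n=0$, $w_0=\varepsilon$. For the inductive step, suppose the statement holds for all RCP functions $h:(\Sigma^*)^k\to\Sigma^*$ with $p_{h,1}+\cdots+p_{h,k}+e_h\le m$, and let $f$ be RCP with this sum equal to $m+1\ge1$. Since the sum is positive, $f$ is not constant $\varepsilon$, so by Lemma~\ref{PatrickInd} either (i) $f(x_1,\ldots,x_k)\in b\Sigma^*$ for all arguments and some fixed $b\in\Sigma$, or (ii) $f(x_1,\ldots,x_k)\in x_i\Sigma^*$ for all arguments and some fixed index $i$. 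In case (i) write $f(\vec x)=b\,g(\vec x)$; in case (ii) write $f(\vec x)=x_i\,g(\vec x)$. The key technical point is that $g$ is again RCP: this is the straightforward $k$-ary analogue of Lemma~\ref{CAS_PatrickBis} — for any morphism $\varphi:\Sigma^*\to\Sigma^*$, if $\varphi(x_j)=\varphi(y_j)$ for all $j$ then $\varphi(f(\vec x))=\varphi(f(\vec y))$, and cancelling the common prefix $\varphi(b)$ (constant) or $\varphi(x_i)=\varphi(y_i)$ yields $\varphi(g(\vec x))=\varphi(g(\vec y))$. I would either invoke a $k$-ary version of Lemma~\ref{CAS_PatrickBis} or simply include this half-line argument inline.

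\medskip

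Next, comparing lengths via Lemma~\ref{l:lengthK}: in case (i), $|f(\vec x)|=1+|g(\vec x)|$ forces $p_{g,j}=p_{f,j}$ for all $j$ and $e_g=e_f-1$; in case (ii), $|f(\vec x)|=|x_i|+|g(\vec x)|$ forces $p_{g,i}=p_{f,i}-1$, $p_{g,j}=p_{f,j}$ for $j\ne i$, and $e_g=e_f$. In both cases $p_{g,1}+\cdots+p_{g,k}+e_g=m$, so the induction hypothesis applies: $g(\vec x)=w_0x_{i_1}^{p_1}w_1\cdots x_{i_n}^{p_n}w_n$ for suitable $n$, words $w_0,\ldots,w_n$ and exponents $p_1,\ldots,p_n$. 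Prepending $b$ (a length-one word, absorbed into $w_0$) in case (i), or prepending $x_i$ (which is $x_i^1$ with $w_{-1}=\varepsilon$, i.e.\ shift indices and take the new $w_0=\varepsilon$, $x_{i_1}=x_i$, $p_1=1$) in case (ii), exhibits $f$ in the required form. This completes the induction and the proof modulo Lemma~\ref{PatrickInd}.

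\medskip

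The main obstacle is therefore not in this deduction — which is essentially bookkeeping — but in Lemma~\ref{PatrickInd} itself, whose proof is deferred. The hard part is the trichotomy: showing that an RCP $k$-ary function that is not constant $\varepsilon$ must, uniformly in all its arguments, start either with a fixed letter or with one designated variable. I expect that lemma to be proved by fixing all but one argument and reducing to the unary Lemma~\ref{Patrick}, then using morphisms that identify or permute letters (as in Lemmata~\ref{l:fa=bw} and~\ref{l:fx=xw}) to glue together the constraints coming from different choices of which coordinate to vary, ruling out the possibility that the ``head'' of $f$ depends on two different variables or switches between a letter and a variable. Once Lemma~\ref{PatrickInd} is in hand, the argument above goes through verbatim.
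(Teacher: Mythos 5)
Your proposal is correct and follows essentially the same route as the paper: induction on $p_{f,1}+\cdots+p_{f,k}+e_f$ from Lemma~\ref{l:lengthK}, invoking the trichotomy of Lemma~\ref{PatrickInd} to peel off a prefix $b$ or $x_i$, with the prefix-cancellation argument (the $k$-ary analogue of Lemma~\ref{CAS_PatrickBis}, which the paper leaves implicit) showing the quotient function $g$ is again RCP. Your anticipated strategy for Lemma~\ref{PatrickInd} itself (freezing arguments to reduce to the unary Lemma~\ref{Patrick} and gluing the constraints) also matches the paper's actual proof.
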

%


\begin{proof}
 The ``if" part (sufficient condition)  is clear as in the unary functions case. 
 
  For the ``only if" part we first note that if $f$ is CP then it is RCP. We next prove 
  that if $f$ is RCP then  it is of the form stated in the Theorem. The proof is by induction on 
$p_{f,1} + p_{f,2}+ \cdots+p_{f,k}+e_f $ where  $|f(x_1,\ldots,x_k)| 
= p_{f,1} |x_1|+ p_{f,2} |x_2|+ \cdots+p_{f,k} |x_k|+e_f$
(cf. Lemma \ref{l:lengthK}).

{\it Basis:}\/ if $p_{f,1} + p_{f,2}+ \cdots+p_{f,k}+ e_f =0$ then $p_{f,1}=\cdots=p_{f,k}=e_f=0$ and $f(x_1,\ldots,x_k)=\varepsilon$ which is of the required form
with $n=0$ and $w_0=\varepsilon$.

{\it Induction: }\/Otherwise, $p_{f,1} + p_{f,2}+ \cdots+p_{f,k}+ e_f \geq 1$ implies that $f\neq \varepsilon$. Thus,
by Lemma \ref{PatrickInd},  there exists an RCP function $g$ such that  \\
-- either there exists  $b\in\Sigma$ such that 
$f(x_1,\dots,x_k)= b g(x_1,\dots,x_k)$ for all  $x_1,\dots,x_k\in \Sigma^*$, and hence $p_{f,i}= p_{g,i}$  for $i=1,\ldots,k$, and $e_f=|f(\varepsilon,\ldots,\varepsilon)| =|g(\varepsilon,\ldots,\varepsilon)| +1=e_g+1$,\\
-- or there exists $i\in\{1,\ldots, k\}$ such that   $f(x_1,\dots,x_k)= x_i g(x_1,\dots,x_k)$ for all  $x_1,\dots,x_k\in \Sigma^*$, and then $p_{f,j}= p_{g,j}$ for all $j\neq i$, $p_{f,i}= p_{g,i}+1$, and  $e_f=
e_g$.\\
In both cases  $g$ is RCP and
$p_{g,1} + p_{g,2}+ \cdots+p_{g,k}+  e_g =p_{f,1}+ p_{f,2}+ \cdots + p_{f,k}+ e_f-1$. By the induction hypothesis $g$  is of the required form and so is $f$.
\end{proof}
The rest of this section is devoted to the proof of  Lemma \ref{PatrickInd},
which shall be given after Lemma~\ref{cas:zSigma*:n}.
Lemmata  \ref{l:f epsilon:n} and  \ref{cas:aSigma*n}  below respectively extend Lemmata \ref{l:f epsilon} and \ref{l:fa=bw} to arity $k$. These two Lemmata respectively deal with conditions ($C_3$) and ($C_1$) in Lemma \ref{PatrickInd}.
%
%

 %
\begin{lemma}\label{l:f epsilon:n} 
Assume $f : (\Sigma^*)^k\to \Sigma^*$ is RCP.
If there are $u_1\neq\varepsilon$, \ldots,   $u_k\neq\varepsilon$ such that $f(u_1,\ldots,u_k)=\varepsilon$
then $f$ is constant with value $\varepsilon$.
 \end{lemma}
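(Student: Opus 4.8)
\textbf{Proof plan for Lemma \ref{l:f epsilon:n}.}
The plan is to mimic the proof of Lemma \ref{l:f epsilon} using the length formula of Lemma \ref{l:lengthK}. By equation \eqref{eq:length-f} we have
\[
|f(x_1,\ldots,x_k)| = p_{f,1}|x_1| + \cdots + p_{f,k}|x_k| + e_f
\]
for all $x_1,\ldots,x_k\in\Sigma^*$, where all the coefficients $p_{f,1},\ldots,p_{f,k},e_f$ are nonnegative integers. Applying this to the tuple $(u_1,\ldots,u_k)$ with $u_i\neq\varepsilon$ (so $|u_i|\geq1$) and using the hypothesis $f(u_1,\ldots,u_k)=\varepsilon$, we get
\[
0 = |f(u_1,\ldots,u_k)| = p_{f,1}|u_1| + \cdots + p_{f,k}|u_k| + e_f.
\]
Since this is a sum of nonnegative terms equal to zero, every term vanishes; in particular $e_f=0$ and, because each $|u_i|\geq1$, also $p_{f,i}=0$ for every $i\in\{1,\ldots,k\}$. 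Hence $|f(x_1,\ldots,x_k)| = 0$ for all $x_1,\ldots,x_k\in\Sigma^*$, which forces $f(x_1,\ldots,x_k)=\varepsilon$ identically, i.e., $f$ is constant with value $\varepsilon$.

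There is essentially no obstacle here: the only ingredient beyond elementary arithmetic is the linear length formula of Lemma \ref{l:lengthK}, which is available, and the observation that the coefficients are nonnegative. The argument is the arity-$k$ transcription of Lemma \ref{l:f epsilon}, the single point requiring a word of care being that one needs \emph{all} of the $u_i$ to be nonempty in order to conclude that \emph{every} coefficient $p_{f,i}$ vanishes.
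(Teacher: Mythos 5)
Your proof is correct and follows exactly the paper's own argument: it applies the linear length formula of Lemma \ref{l:lengthK} at $(u_1,\ldots,u_k)$, notes that a sum of nonnegative terms equal to zero forces $e_f=0$ and $p_{f,i}=0$ for all $i$ (using $|u_i|\geq1$), and concludes $|f|\equiv0$, i.e., $f\equiv\varepsilon$. No differences worth noting.
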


\begin{proof} Similar to the proof of Lemma~\ref{l:f epsilon}.
Since $|f(u_1,\ldots,u_k)|=0$ and no $|u_i|$ is null, 
equality $|f(u_1,\ldots,u_k)|=p_{f,1} |u_1|+\cdots+p_{f,k} |u_k| + e_f$
yields $p_{f,1}=\cdots=p_{f,k} = e_f$,
and hence $f$ is the constant $\varepsilon$. 
\end{proof}
We now define functions obtained by ``freezing" some arguments of a given function. Such functions will be used in the proofs of  subsequent Lemmata.
\begin{definition}[Freezing arguments]
Let $f\colon (\Sigma^*)^k\to\Sigma^*$.
If $u, v_1,\ldots,v_{i-1}, v_{i+1},\ldots, v_k\in\Sigma^*$ and $i\in \{1,\ldots,k\}$
we denote by  $f^{v_1,\ldots,v_{i-1}, v_{i+1},\ldots, v_k}_i$ 
and $f^u_{1,\ldots,i-1,i+1,\ldots,k}$
the unary and $k-1$ ary functions over $\Sigma^*$ such that, 
for all $x, x_1,\ldots,x_{i-1},u,x_{i+1},\ldots,x_k\in\Sigma^*$,
\begin{eqnarray*}
f^{v_1,\ldots,v_{i-1}, v_{i+1},\ldots, v_k}_i(x)
&=& f(v_1,\ldots,v_{i-1},x,v_{i+1},\ldots,v_k)
\\
f^u_{1,\ldots,i-1,i+1,\ldots,k}(x_1,\ldots,x_{i-1},x_{i+1},\ldots,x_k)
&=& f(x_1,\ldots,x_{i-1},u,x_{i+1},\ldots,x_k)
\end{eqnarray*}
\end{definition}
\begin{lemma}\label{l:proj:n} 
Let $|\Sigma|\geq 3$. If $f\colon (\Sigma^*)^k\to\Sigma^*$ is RCP 
then  the functions $f^{v_1,\ldots,v_{i-1}, v_{i+1},\ldots, v_k}_i$
and $f^u_{1,\ldots,i-1,i+1,\ldots,k}$ are also RCP for all
$i\in \{1,\ldots,k\}$ and $u, v_1,\ldots,v_{i-1}, v_{i+1},\ldots, v_k\in\Sigma^*$.
 \end{lemma}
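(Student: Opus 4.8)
The plan is to reduce the claim about the ``frozen'' functions directly to the defining property of RCP functions, using the fact that restricted congruences are exactly kernels of morphisms $\Sigma^*\to\Sigma^*$. First I would recall that, by definition, $f$ being RCP means: for every morphism $\varphi\colon\Sigma^*\to\Sigma^*$ and all tuples $(x_1,\ldots,x_k)$, $(y_1,\ldots,y_k)$ with $\varphi(x_j)=\varphi(y_j)$ for $j=1,\ldots,k$, one has $\varphi(f(x_1,\ldots,x_k))=\varphi(f(y_1,\ldots,y_k))$.

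For $f^{v_1,\ldots,v_{i-1},v_{i+1},\ldots,v_k}_i$, fix a morphism $\varphi$ and words $x,y$ with $\varphi(x)=\varphi(y)$. Since trivially $\varphi(v_j)=\varphi(v_j)$ for each $j\neq i$, the tuples $(v_1,\ldots,v_{i-1},x,v_{i+1},\ldots,v_k)$ and $(v_1,\ldots,v_{i-1},y,v_{i+1},\ldots,v_k)$ agree componentwise under $\varphi$, so by the RCP property of $f$ we get $\varphi(f(\ldots,x,\ldots))=\varphi(f(\ldots,y,\ldots))$, i.e.\ $\varphi(f^{\ldots}_i(x))=\varphi(f^{\ldots}_i(y))$. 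Hence $f^{\ldots}_i$ preserves the kernel of $\varphi$; as $\varphi$ was arbitrary, $f^{\ldots}_i$ is RCP. The argument for $f^u_{1,\ldots,i-1,i+1,\ldots,k}$ is identical: given $\varphi$ and tuples $(x_1,\ldots,x_{i-1},x_{i+1},\ldots,x_k)$, $(y_1,\ldots,y_{i-1},y_{i+1},\ldots,y_k)$ agreeing componentwise under $\varphi$, insert the common entry $u$ in position $i$ (so $\varphi(u)=\varphi(u)$ holds) and apply the RCP property of $f$ to conclude $\varphi(f^u_{\ldots}(\vec x))=\varphi(f^u_{\ldots}(\vec y))$.

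I do not expect any genuine obstacle here: the lemma is a purely formal ``substitution'' fact, and the hypothesis $|\Sigma|\ge 3$ plays no role (it is inherited from the ambient section but unused). The only point to be slightly careful about is to phrase the reduction via morphisms rather than via abstract congruences --- this is legitimate precisely because restricted congruences were \emph{defined} as kernels of morphisms $\Sigma^*\to\Sigma^*$, so preserving all such kernels is exactly what RCP demands. If one preferred to work congruence-theoretically, the same proof goes through verbatim replacing ``$\varphi(\cdot)=\varphi(\cdot)$'' by ``$\cdot\sim\cdot$'' for an arbitrary restricted congruence $\sim$, since freezing an argument to a fixed value $u$ (resp.\ $v_j$) only uses the reflexivity $u\sim u$ (resp.\ $v_j\sim v_j$).

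\begin{proof}
Let $\sim$ be a restricted congruence on $\Sigma^*$, i.e.\ $\sim\,=\kernel(\varphi)$ for some morphism $\varphi\colon\Sigma^*\to\Sigma^*$. Fix $i\in\{1,\ldots,k\}$ and $u,v_1,\ldots,v_{i-1},v_{i+1},\ldots,v_k\in\Sigma^*$.

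For the unary function $h=f^{v_1,\ldots,v_{i-1},v_{i+1},\ldots,v_k}_i$, suppose $x\sim y$. Since $v_j\sim v_j$ for every $j\neq i$, applying the RCP property of $f$ to the tuples $(v_1,\ldots,v_{i-1},x,v_{i+1},\ldots,v_k)$ and $(v_1,\ldots,v_{i-1},y,v_{i+1},\ldots,v_k)$ gives
$$
f(v_1,\ldots,v_{i-1},x,v_{i+1},\ldots,v_k)\ \sim\ f(v_1,\ldots,v_{i-1},y,v_{i+1},\ldots,v_k),
$$
that is, $h(x)\sim h(y)$. Hence $h$ preserves $\sim$, and since $\sim$ was an arbitrary restricted congruence, $h$ is RCP.

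For the $(k-1)$-ary function $g=f^u_{1,\ldots,i-1,i+1,\ldots,k}$, suppose $x_j\sim y_j$ for all $j\in\{1,\ldots,i-1,i+1,\ldots,k\}$. Since $u\sim u$, applying the RCP property of $f$ to the tuples obtained by inserting $u$ in position $i$ gives
$$
f(x_1,\ldots,x_{i-1},u,x_{i+1},\ldots,x_k)\ \sim\ f(y_1,\ldots,y_{i-1},u,y_{i+1},\ldots,y_k),
$$
that is, $g(x_1,\ldots,x_{i-1},x_{i+1},\ldots,x_k)\sim g(y_1,\ldots,y_{i-1},y_{i+1},\ldots,y_k)$. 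Thus $g$ preserves every restricted congruence, so $g$ is RCP.
\end{proof}
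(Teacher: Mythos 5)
Your proof is correct and is exactly the routine substitution argument the paper has in mind: the paper's own proof of this lemma is simply ``Straightforward,'' and your write-up (using reflexivity $u\sim u$, $v_j\sim v_j$ for the frozen arguments and quantifying over kernels of morphisms $\Sigma^*\to\Sigma^*$) is the natural way to spell it out. Your remark that the hypothesis $|\Sigma|\geq 3$ is not actually used is also accurate.
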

 \begin{proof} Straightforward.
 \end{proof}

 \begin{lemma}\label{cas:aSigma*n} 
Let $|\Sigma|\geq 3$ and $f\colon (\Sigma^*)^k\to\Sigma^*$ be RCP.  Assume  there exists $b\in\Sigma$ and $(x_1,\ldots,x_k)\in (\Sigma^*)^{k}$ such that none of $x_1,\ldots,x_k$ has $b$ as first letter and $f(x_1,x_2,\ldots,x_k) \in b\Sigma^*$. Then $f(z_1,z_2,\ldots,z_k) \in b\Sigma^*$  for all   $(z_1,z_2,\ldots,z_k)\in (\Sigma^*)^{k}$.
 \end{lemma}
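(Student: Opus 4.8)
\textbf{Plan of proof for Lemma~\ref{cas:aSigma*n}.}
The strategy is to reduce the $k$-ary statement to the unary case already settled in Lemma~\ref{l:fa=bw}, by freezing all but one argument and moving between tuples one coordinate at a time. First I would fix the witnessing tuple $(x_1,\ldots,x_k)$ with $f(x_1,\ldots,x_k)\in b\Sigma^*$ and none of the $x_i$ beginning with $b$. The plan is to show, coordinate by coordinate, that one may replace $x_i$ by an \emph{arbitrary} word $z_i$ while keeping the value in $b\Sigma^*$; iterating over $i=1,\ldots,k$ then yields $f(z_1,\ldots,z_k)\in b\Sigma^*$ for all tuples.

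For a single coordinate, say $i=1$, consider the unary function $h=f^{x_2,\ldots,x_k}_1$, which is RCP by Lemma~\ref{l:proj:n}. We know $h(x_1)\in b\Sigma^*$ and $x_1$ does not begin with $b$. If $x_1$ is a single letter $a\neq b$, Lemma~\ref{l:fa=bw} applies directly to $h$ and gives $h(z)\in b\Sigma^*$ for all $z\in\Sigma^*$. The mild complication is that $x_1$ need not be a single letter; here I would use a morphism $\varphi\colon\Sigma^*\to\Sigma^*$ collapsing $x_1$ to a single letter $a\notin\{b\}$ — concretely, if the first letter of $x_1$ is $a$, take $\varphi$ identifying every letter occurring in $x_1$ with $a$ (and possibly fixing the rest), or more simply use an erasing/identifying morphism so that $\varphi(x_1)=a$. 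Since $\varphi(x_1)$ is a letter $\neq b$ and $b$ is fixed by $\varphi$, the hypothesis $h(x_1)\in b\Sigma^*$ together with RCP transports to the statement that the RCP function $z\mapsto\varphi(h(z))$ sends the single letter $a$ into $b\Sigma^*$; Lemma~\ref{l:fa=bw} then gives $\varphi(h(z))\in b\Sigma^*$ for all $z$, hence $h(z)\in b\Sigma^*$ (the first letter of $h(z)$ maps to the first letter $b$ of $\varphi(h(z))$, and $b$ is a fixed point of $\varphi$, forcing it to equal $b$ — unless the first letter of $h(z)$ is erased, which is handled by the same $c^*b\Sigma^*$ intersection trick used repeatedly in the proof of Lemma~\ref{l:fa=bw}). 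An even cleaner route, avoiding morphism bookkeeping: observe that $h(x_1)\in b\Sigma^*$ plus $x_1\notin b\Sigma^*$ means the hypothesis of the ``first letter'' dichotomy in Lemma~\ref{l:fa=bw} is already met at the word $x_1$, and re-examining that proof shows it only uses that $x_1$ is \emph{some} word not beginning with $b$, not specifically a letter; I would state this as a remark and invoke it.

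Having fixed coordinate $1$, I would then move to coordinate $2$: pick any $z_1$, set $g=f^{z_1,x_3,\ldots,x_k}_2$ (RCP by Lemma~\ref{l:proj:n}); we already know $g(x_2)=f(z_1,x_2,x_3,\ldots,x_k)\in b\Sigma^*$ and $x_2\notin b\Sigma^*$, so the same argument gives $g(z_2)\in b\Sigma^*$ for all $z_2$. Repeating for $i=3,\ldots,k$, at each stage feeding the already-freed coordinates as the frozen values, I obtain $f(z_1,\ldots,z_k)\in b\Sigma^*$ for every tuple. The main obstacle is the first step — making precise that Lemma~\ref{l:fa=bw} (or its proof) licenses starting from an arbitrary word $x_1\notin b\Sigma^*$ rather than a letter — and once that observation is in place the inductive sweep over coordinates is routine.
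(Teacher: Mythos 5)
Your coordinate-by-coordinate sweep with frozen arguments is structurally the same as the paper's induction on the arity $k$ (freeze all but one coordinate, settle that coordinate, then recurse), so the outer skeleton is fine. The genuine gap is exactly at the point you flag as "the main obstacle": the unary step. Lemma~\ref{l:fa=bw} has its hypothesis at a single \emph{letter} $a\neq b$, and neither of your two patches makes the jump to an arbitrary word $x_1\notin b\Sigma^*$. The "re-examine the proof" route rests on a false claim: the proof of Lemma~\ref{l:fa=bw} starts from $\varphi_{a,c}(a)=\varphi_{a,c}(c)$, i.e.\ it crucially uses that the hypothesis point $a$ is congruent to another letter under a letter-identifying morphism; these morphisms preserve length, so a word $x_1$ of length $\geq 2$ (or $x_1=\varepsilon$) is never identified with a letter and the very first step of that proof has no analogue. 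The morphism-collapsing route also breaks down: there need not exist any morphism with $\varphi(x_1)=a$ (try $x_1=aba$ or $x_1=aa$; identifications preserve length and erasures cannot produce a single $a$ from two occurrences), the letter $b$ may occur inside $x_1$ and then gets collapsed or erased as well, and the composite $z\mapsto\varphi(h(z))$ need not be RCP at all --- morphisms themselves are generally not RCP (the paper's own example $a\mapsto a$, $b\mapsto a$ fails to preserve the restricted congruence $\sim_a$), so "the RCP function $z\mapsto\varphi(h(z))$" is unjustified.

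The correct and available tool, which the paper uses and you never invoke, is the trichotomy of Lemma~\ref{Patrick}: the frozen unary function $h=f^{x_2,\ldots,x_k}_1$ is RCP by Lemma~\ref{l:proj:n}, and it satisfies exactly one of ($C_1$), ($C_2$), ($C_3$). Since $h(x_1)\in b\Sigma^*$ is nonempty, ($C_3$) is out; since $x_1$ does not begin with $b$, ($C_2$) would force the first letter of $h(x_1)$ to differ from $b$, so it is out too; hence ($C_1$) holds, and the distinguished letter must be $b$ because $h(x_1)\in b\Sigma^*$. That one-line argument is what legitimately yields the "word version" you need at each stage of your sweep; with it, the rest of your proposal goes through and coincides with the paper's inductive proof of Lemma~\ref{cas:aSigma*n}.
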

\begin{proof}  By induction on $k$. {\it Base case: $k=1$.} It follows from  Lemma~\ref{Patrick}. \\
 {\it Induction.} Let $k\geq 2$ and assume the Lemma holds for arities $n<k$. Let $\vec x=
(x_2,\ldots,x_k)$.  
The function $f^{\vec x}_{1}$ is RCP by Lemma \ref{l:proj:n}. As $x_1\not\in b\Sigma^*$ and $f^{\vec x}_{1}(x_1)=f(x_1,x_2,\ldots,x_k)\in b\Sigma^*$  Lemma~\ref{Patrick} implies that $f^{\vec x}_1(z_1)\in b\Sigma^*$  for all $z_1\in\Sigma^*$, and hence also 
  $f^{z_1}_{2\ldots k}(\vec x)=f(z_1,\vec x)=f^{\vec x}_{1}(z_1)\in b\Sigma^*$. As $f^{z_1}_{2\ldots k}$ is a $(k-1)$-ary RCP function by Lemma \ref{l:proj:n}, and as the first letters of $x_2,\ldots,x_k$ are all different of $b$,  the induction hypothesis
insures that $f^{z_1}_{2\ldots k}(\vec z)\in b\Sigma^*$ for all $\vec z=(z_2,\ldots,z_k)\in (\Sigma^*)^{k-1}$.   As $f(z_1,z_2,\ldots,z_k)=f^{z_1}_{2\ldots k}(\vec z)$, we have  for all $z_1, \vec z$,  $f(z_1,z_2,\ldots,z_k)\in b\Sigma^*$ and the induction is proved.
\end{proof}

 \begin{lemma}\label{cas:zSigma*:n} 
Let $|\Sigma|\geq 3$ and let $f\colon (\Sigma^*)^k\to\Sigma^*$ be RCP.
Assume the two following conditions hold
\begin{enumerate}
\item[(*)]
For every $i\in\{1,\ldots,k\}$ and $u\in\Sigma^*$,
the function $f^u_{1,\ldots,i-1,i+1,\ldots,k}$ satisfies exactly one  of the 
conditions ($C_1$), ($C_2$), ($C_3$) of Lemma~\ref{PatrickInd}.
\item[(**)]
There exists $y\in\Sigma^*\setminus\{\varepsilon\}$ and an index $i\in\{2,\ldots,k\}$ 
such that 
$f(y,x_2,\ldots,x_k) \in x_i\Sigma^*$ for all $(x_2,\ldots,x_k)\in (\Sigma^*)^{k-1}$.
\end{enumerate}
Then $f(z_1,z_2,\ldots,z_k) \in z_i\Sigma^*$
for all $(z_1,z_2,\ldots,z_k)\in (\Sigma^*)^k$. \end{lemma}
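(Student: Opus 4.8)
The plan is to fix the index $i$ given by hypothesis (**) and show that $z_i$ is a prefix of $f(z_1,\ldots,z_k)$ for \emph{all} tuples, by ``propagating'' the prefix property from the single value $y$ in the first coordinate to arbitrary values. Since we already know from (**) that $f(y,x_2,\ldots,x_k)\in x_i\Sigma^*$ for all $(x_2,\ldots,x_k)$, the heart of the matter is to vary the first argument. The first move I would make is to freeze all arguments except the first: for any fixed $\vec v=(v_2,\ldots,v_k)$, the unary function $g_{\vec v}=f^{v_2,\ldots,v_k}_1$ is RCP by Lemma~\ref{l:proj:n}, so by Lemma~\ref{Patrick} it satisfies exactly one of ($C_1$), ($C_2$), ($C_3$). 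Hypothesis (**) pins down which one when the second coordinate is chosen well: taking $\vec v$ with $v_i$ a letter $c$, we get $g_{\vec v}(y)=f(y,\ldots,c,\ldots)\in c\Sigma^*$, so $g_{\vec v}$ has first output letter $c$ on the input $y$; choosing $c\neq y$ (possible since $|\Sigma|\ge 3$) rules out ($C_3$) — it is not constant $\varepsilon$ — and forces, via Lemma~\ref{Patrick}, either that $g_{\vec v}(z)\in z\Sigma^*$ for all $z$, or that $g_{\vec v}$ is of type ($C_1$) with constant first letter $c\neq y$; the latter contradicts $g_{\vec v}(y)\in c\Sigma^*$ only if $c\ne$ first letter, so actually we must be careful — both ($C_1$) with letter $c$ and ($C_2$) are a priori compatible with $g_{\vec v}(y)\in c\Sigma^*$ when $c$ happens to equal the first letter of $y$. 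So I would instead choose $c$ \emph{different} from the first letter of $y$: then ($C_1$) would give first letter $c\neq$ first letter of $y$, contradicting $g_{\vec v}(y)\in c\Sigma^*$; and ($C_3$) is excluded as above. Hence $g_{\vec v}$ is of type ($C_2$): $f(z_1,v_2,\ldots,v_k)\in z_1\Sigma^*$ for all $z_1$, whenever $v_i$ is a letter $\neq$ (first letter of $y$).

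\textbf{Key steps in order.} First, establish the claim above: for every $\vec v=(v_2,\ldots,v_k)$ with $v_i\in\Sigma\setminus\{c_0\}$, where $c_0$ is the first letter of $y$, one has $f(z_1,v_2,\ldots,v_k)\in z_1\Sigma^*$ for all $z_1$. Second, I want to promote ``$v_i$ a single letter'' to ``$v_i$ arbitrary'', and simultaneously remove the restriction $v_i\neq c_0$. For this, fix $z_1\neq\varepsilon$ and consider the $(k-1)$-ary function $h=f^{z_1}_{1,\ldots}$ obtained by freezing the first coordinate to $z_1$; it is RCP by Lemma~\ref{l:proj:n}. Hypothesis (*) (applied with that $u=z_1$) tells us $h$ satisfies exactly one of ($C_1$), ($C_2$), ($C_3$). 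By step one, on inputs $(x_2,\ldots,x_k)$ with $x_i$ a letter $\neq c_0$ we have $h(\ldots)\in x_i\Sigma^*$, and letting $x_i$ range over two distinct such letters shows $h$ cannot be ($C_1$) (constant first letter) nor ($C_3$); also it cannot be ($C_2$) ``pointing at'' a coordinate $j\neq i$, since that would force $h(x_2,\ldots,x_k)\in x_j\Sigma^*$ for all tuples, contradicting the case $x_j$ a letter $\neq$ the first letter of $x_i$. Hence $h$ is ($C_2$) pointing at coordinate $i$: $f(z_1,z_2,\ldots,z_k)\in z_i\Sigma^*$ for all $(z_2,\ldots,z_k)$, for this fixed $z_1\neq\varepsilon$. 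Third, handle $z_1=\varepsilon$: take any letter $d$ and a morphism $\psi_d$ erasing $d$ (as used repeatedly earlier in the paper); since $\psi_d(\varepsilon)=\psi_d(d)$ and $f(d,z_2,\ldots,z_k)\in z_i\Sigma^*$ by the previous step, $\psi_d(f(\varepsilon,z_2,\ldots,z_k))=\psi_d(f(d,z_2,\ldots,z_k))$, and an intersection argument over two distinct letters $d$ (possible since $|\Sigma|\ge 3$) forces $f(\varepsilon,z_2,\ldots,z_k)\in z_i\Sigma^*$ as well — here one must be mildly careful when $z_i$ itself contains occurrences of $d$, so I would run the argument isolating the first letter of $z_i$ only, exactly in the style of the base case $n=0$ in Lemma~\ref{l:fa=bw}.

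\textbf{Main obstacle.} The delicate point throughout is disambiguating between cases ($C_1$), ($C_2$)-at-$i$, and ($C_2$)-at-$j\neq i$ for the frozen unary/$(k-1)$-ary functions: knowing a single output value $f(y,\ldots)$ or $f(z_1,\ldots)$ lies in some $x_i\Sigma^*$ is \emph{not} by itself enough, because the first letter of $x_i$ might coincidentally match a ``constant'' first letter from ($C_1$) or the first letter of some other $x_j$. The remedy — and the recurring trick in the plan — is always to test against \emph{two} distinct letters in the coordinate in question, and to pick letters distinct from the first letter of whatever word is playing the role of the witness; the hypothesis $|\Sigma|\ge 3$ is exactly what guarantees enough room to do this. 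A secondary, more bookkeeping-type obstacle is that hypothesis (**) fixes the witnessing word $y$ in the \emph{first} coordinate and the target index $i$ in $\{2,\ldots,k\}$; one should check the argument is genuinely symmetric enough that the conclusion ``$f(z_1,\ldots,z_k)\in z_i\Sigma^*$'' comes out with the \emph{same} $i$, which it does because at each stage the ($C_2$) alternative we land in is forced to point at coordinate $i$ and nowhere else.
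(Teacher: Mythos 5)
There is a genuine error in your first key step, and it is exactly at the point you flagged as delicate. For $\vec v=(v_2,\ldots,v_k)$ with $v_i$ a letter $c$ different from the first letter $c_0$ of $y$, the unary RCP function $g_{\vec v}=f^{v_2,\ldots,v_{k}}_1$ satisfies $g_{\vec v}(y)=f(y,\vec v)\in c\Sigma^*$ by (**); but this piece of information rules out the \emph{($C_2$)} alternative of Lemma~\ref{Patrick} (which would force the first letter of $g_{\vec v}(y)$ to be $c_0\neq c$), and it is perfectly consistent with ($C_1$) with constant first letter $c$. You resolved the trichotomy the other way round: your claimed conclusion ``$g_{\vec v}$ is of type ($C_2$), i.e.\ $f(z_1,v_2,\ldots,v_k)\in z_1\Sigma^*$ for all $z_1$'' is false. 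Indeed it contradicts the very statement of Lemma~\ref{cas:zSigma*:n}, and the projection $f(x_1,\ldots,x_k)=x_i$ (which is RCP and satisfies (*) and (**)) is a concrete counterexample: there $f(z_1,\vec v)=v_i$, which is not in $z_1\Sigma^*$ once $z_1$ starts with a letter $\neq v_i$. The correct output of this case analysis is the ($C_1$)-type statement $f(z_1,\vec v)\in v_i\Sigma^*$ for all $z_1$, whenever $v_i$ is a letter $\neq c_0$.

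The proposal is also internally inconsistent, because in your second step you silently invoke precisely this correct statement (``on inputs with $x_i$ a letter $\neq c_0$ we have $h(\ldots)\in x_i\Sigma^*$''), which does not follow from the step-one claim you actually wrote down. Once step one is restated correctly, the remainder of your plan does work: for each fixed $z_1$ (the restriction $z_1\neq\varepsilon$ and your third step are unnecessary, since (*) applies to $u=\varepsilon$ as well), the $(k-1)$-ary RCP function $h=f^{z_1}_{2,\ldots,k}$ must by (*) be of one of the three forms, and testing two distinct letters $\neq c_0$ in coordinate $i$, together with a letter in coordinate $j$ whose first letter differs from that of $x_i$, kills ($C_1$), ($C_3$) and ($C_2$)-at-$j\neq i$, leaving ($C_2$)-at-$i$, which is the conclusion. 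This repaired argument is a mild variant of the paper's proof, which instead classifies $f^{y'}_{2,\ldots,k}$ directly for each $y'$ and eliminates the unwanted cases using the unary restrictions $f^{\vec a}_1$ and the function obtained by freezing the $i$-th coordinate at a letter; the two routes are of comparable length, but as submitted your step one is wrong and must be corrected, not merely polished.
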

\begin{proof}   For all $y'\in\Sigma^*$ 
the function $f^{y'}_{2\ldots k}$ is RCP.    Hence by condition (*), there are exactly three possible cases, one of which splits into two subcases. The proof idea is as follows: for all $y'\in\Sigma^*$, all cases lead to a contradiction except subcase 3.2. Finally, stating that subcase 3.2 holds for all  $y'\in\Sigma^*$ is  exactly the conclusion of the Lemma.\\
Let $i$ and $y$ be as given in condition (**). Condition (**) implies 
\begin{eqnarray}\label{f^ay-in-aSigma*}
\text{for all } \vec a=(a,\ldots,a)\in\Sigma^{k-1}\qquad\qquad\quad f^y_{2\ldots k}(\vec a) = f^{\vec a}_{1}(y)=f(y,\vec a) &\in& a\Sigma^*\qquad\qquad \qquad 
\end{eqnarray}
\smallskip
{\bf Case 1.}  Condition ($C_1$) holds for $y'$: {\it for some $b\in\Sigma$,   
$f^{y'}_{2\ldots k}(\vec x) \in b\Sigma^*$ for all $\vec x\in(\Sigma^*)^{k-1}$.}
We show that this case is impossible.
Since $|\Sigma|\geq3$, there exists $a\in\Sigma$ which is different from $b$ 
and from the first letter of $y$.
Set $\vec a=(a,\ldots,a)\in \Sigma^{k-1}$. {\bf Case 1} hypothesis implies
\begin{eqnarray}\label{f^ay-in-bSigma*}
   f^{\vec a}_{1}(y')=f^{y'}_{2\ldots k}(\vec a) &\in& b\Sigma^*
\end{eqnarray}
As $f^{\vec a}_{1}$ is a unary RCP function, it has one of the three forms given in 
Lemma~\ref{Patrick}. 
Conditions \eqref{f^ay-in-aSigma*} and 
\eqref{f^ay-in-bSigma*}  show that $f^{\vec a}_{1}$ can only be of the form 
$z\mapsto zg(z)$ for all $z$. 
Applying condition \eqref{f^ay-in-bSigma*}, we see that the first letter of $y$ should be $a$, 
contradicting the choice of $a$.
\\
{\bf Case 2.} Condition ($C_3$) holds for $y'$: {\it   $f^{y'}_{2\ldots k}(\vec x) =\varepsilon$ for all $\vec x\in \Sigma^{k-1}$.} \\
This  case also is excluded.
Let $a$ be different from the first letter of $y$ and $\vec a=(a,\ldots,a)\in \Sigma^{k-1}$.  By condition \eqref{f^ay-in-aSigma*},  $f_{1}^{\vec a}$ is not the constant function $\varepsilon$.  Thus, by Lemma \ref{Patrick} the RCP function  
$f_{1}^{\vec a}$ can have two possible forms

-- Either  there is some $b\in\Sigma$ such that $f_{1}^{\vec a}(z)\in b\Sigma^*$ for all $z$,
in particular $f_{1}^{\vec a}(y')\in b\Sigma^*$. As
 $f_{1}^{\vec a}(y')= f^{y'}_{2\ldots k}(\vec a)=f(y',\vec a)$,   {\bf Case 2.} hypothesis implies $f_{1}^{\vec a}(y') =\varepsilon$, a contradiction.

-- Or $f_{1}^{\vec a}(z)\in z\Sigma^*$ for all $z$, which,  letting $z=y$, implies
$f_{1}^{\vec a}(y)\in y\Sigma^*$.
 This contradicts condition \eqref{f^ay-in-aSigma*} because $a$ is assumed to be different from the first letter of $y$.
\\
{\bf Case 3.} Condition ($C_2$) holds for $y'$: {\it there exists an index $j\in\{2,\ldots,k\}$ such that  
$f^{y'}_{2\ldots k}(z_2,\ldots,z_k) \in z_j\Sigma^*$ for all 
$(z_2,\ldots,z_k)\in(\Sigma^*)^{k-1}$.}  This case naturally splits into two subcases.
\\
\smallskip\noindent
{\it Subcase 3.1.} If $j\neq i$ we deduce a contradiction by letting  $a\in\Sigma$ be different from the first letter of $y$ and considering the $(k-1)$-ary RCP function  $f^{a}_{1\ldots (i-1)  (i+1)\ldots k}$. Noting that $f^{a}_{1\ldots (i-1)  (i+1)\ldots k}(z_1,z_2,\ldots,z_{i-1},z_{i+1},\ldots,z_k)=f(z_1,z_2,\ldots,z_{i-1},a, z_{i+1},\ldots,z_k) $ for all  $(k-1)$-tuple
$\vec{z} = (z_1,z_2,\ldots,z_{i-1},z_{i+1},\ldots,z_k)$, we have 
\begin{eqnarray}\label{eq:z1= y}
\text{By condition (**)} \qquad\qquad z_1=y \ \Longrightarrow \ f^{a}_{1\ldots (i-1)  (i+1)\ldots k}(\vec{z})&\in& a\Sigma^* 
\\\label{eq:z1=yprime}
\text{By {\bf Case 3.} hypothesis}  \qquad\quad z_1=y'\ \Longrightarrow \ f^{a}_{1\ldots (i-1)  (i+1)\ldots k}(\vec{z})&\in& z_j\Sigma^*
\end{eqnarray}
By condition (*) the function $f^{a}_{1\ldots (i-1)  (i+1)\ldots k}$  has three possible  forms. 
\begin{enumerate}
\item[($C_1$)] Either for some $b\in\Sigma$,
$f^{a}_{1\ldots (i-1)  (i+1)\ldots k}(\vec{z}) \in b\Sigma^*$ for every $\vec{z}\in(\Sigma^*)^{k-1}$. It is excluded:
it contradicts \eqref{eq:z1=yprime} when $z_1=y'$ 
and the first letter of $z_j$ is different from $b$.

\item[($C_2$)] Or, for some $\ell\in\{1,\ldots,i-1,i+1,\ldots,k\}$, 
we have $f^{a}_{1\ldots (i-1)  (i+1)\ldots k}(\vec{z}) \in z_\ell\Sigma^*$ 
for every $\vec{z}\in(\Sigma^*)^{k-1}$. This contradicts \eqref{eq:z1= y} when $z_1=y$.  
Indeed, if $\ell\neq 1$ then  the first letter of $z_\ell$ can be chosen different from $a$, and if 
$\ell= 1$ then the first letter of $y$ already  is different from $a$.

\item [($C_3$)] Or $f^{a}_{1\ldots (i-1)  (i+1)\ldots k}=\varepsilon$. 
This is impossible because it contradicts \eqref{eq:z1= y}.
\end{enumerate}
{\it Subcase 3.2.} If $j=i$ then $f(y',z_2,\ldots,z_k)=f^{y'}_{2\ldots k}(z_2,\ldots,z_k)\in z_i\Sigma^*$
for all $z_2,\ldots,z_k\in\Sigma^*$. For all $y'$ this is the only non contradictory case, and hence  $f(y',z_2,\ldots,z_k)\in z_i\Sigma^*$  for all $y', z_2,\ldots,z_k$, and the conclusion of the Lemma holds.
 \end{proof}
 We finally prove Lemma \ref{PatrickInd}.
\begin{proof}[Proof of Lemma \ref{PatrickInd}]
We argue by induction on the arity $k$. \\
{\it Basis.} For $k=1$ this is Lemma \ref{Patrick}. \\
{\it Induction.} Let $k\geq 2$.
 Assume the  result holds for arity $<k$, we prove  that it also holds for arity $k$. 
 Consider the $(k-1)$-ary functions $f^a_{2,\ldots,k}$, for $a\in\Sigma$. They are  RCP  by Lemma~\ref{l:proj:n}. By the induction hypothesis they must satisfy exactly one of the conditions $C_1,\ C_2, \ C_3$, and hence
\begin{enumerate}
\item Either {\it there exists $a\in\Sigma$ such that $f^a_{2,\ldots,k}$ satisfies ($C_1$)
relative to some $b\neq a$,}\/ i.e.,  $f^a_{2,\ldots,k}(x_2,\ldots,x_k) \in b\Sigma^*$ for all 
$x_2,\ldots,x_k\in\Sigma^*$. By Lemma~\ref{cas:aSigma*n}, $f$ satisfies ($C_1$).

\item or {\it  for all $a\in\Sigma$ the function $f^a_{2,\ldots,k}$ satisfies ($C_1$) relative to $b=a$,\/} 
i.e., for all $x_2,\ldots,x_k\in\Sigma^*$, $f^a_{2,\ldots,k}(x_2,\ldots,x_k) \in a\Sigma^*$.
For any $u_2,\ldots,u_k\in\Sigma^*$,  the unary function $f^{u_2,\ldots,u_k}_1$ satisfies $f^{u_2,\ldots,u_k}_1(a) \in a\Sigma^*$ for any $a\in\Sigma$.
By Lemma~\ref{l:fx=xw},  $f^{u_2,\ldots,u_k}_1(u) \in u\Sigma^*$ for any $u\in\Sigma^*$.
As this holds for any $u_2,\ldots,u_k\in\Sigma^*$, we infer that $f$ satisfies ($C_2$)
relative to the index $1$.

\item Or {\it there exists $a\in\Sigma$ such that $f^a_{2,\ldots,k}$ satisfies ($C_2$) 
relative to an index $i\in\{2,\ldots,k\}$,}
i.e., $f^a_{2,\ldots,k}(x_2,\ldots,x_k) \in x_i\Sigma^*$ for all $x_2,\ldots,x_k\in\Sigma^*$.
Then condition~(**) of Lemma~\ref{cas:zSigma*:n} holds.
As the arity of $f^a_{2,\ldots,k}$ is $(k-1)$,
the induction hypothesis insures that condition~(*) of Lemma~\ref{cas:zSigma*:n}  also holds.
Applying Lemma~\ref{cas:zSigma*:n}, we see that $f$ satisfies ($C_2$) with the same index $i$.

\item or {\it there exists $a\in\Sigma$ such that $f^a_{2,\ldots,k}$ satisfies ($C_3$)}.
Then $f(a,a,\ldots,a)=\varepsilon$ and Lemma~\ref{l:f epsilon:n} shows that $f$ satisfies ($C_3$).
\qedhere
 \end{enumerate}
\end{proof}
\section{ $k$-ary case with infinite alphabet}
The proof of the passage to arity $k$ is much simpler if the alphabet $\Sigma$ is infinite
rather than of cardinality at least $3$.
\begin{theorem}
Assume $\Sigma$ is infinite. Then every RCP function $f:(\Sigma^*)^k \to \Sigma^*$
is of the form $f(x_1,\ldots,x_k)=w_0x_{i_1}^{p_1}w_1x_{i_2}^{p_2}w_2\cdots x_{i_n}^{p_n}w_n$,
where $x_{i_j}\in\{x_1,\ldots,x_k\}$ for $j=1,\ldots,n$.
\end{theorem}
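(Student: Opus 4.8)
The plan is to reduce the $k$-ary case to the unary case (Theorem~\ref{Carac_par_Poly}, equivalently Theorem~\ref{Carac_par_Poly RCP}) by exploiting the fact that an infinite alphabet gives us an unlimited supply of ``fresh'' letters that do not occur in any fixed finite collection of words. First I would invoke Lemma~\ref{l:lengthK} to get the length data $p_{f,1},\ldots,p_{f,k},e_f$, so that $|f(x_1,\ldots,x_k)| = \sum_i p_{f,i}|x_i| + e_f$; this already pins down the total number of occurrences of the variable blocks. The key idea is then the following: pick a letter $a_i$ for each $i\in\{1,\ldots,k\}$ such that $a_1,\ldots,a_k$ are pairwise distinct, and moreover distinct from every letter occurring in the (finitely many) words $w_0,\ldots,w_n$ we are trying to reconstruct. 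Since $\Sigma$ is infinite such letters exist.

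Next I would consider, for a fixed index $i$, the unary ``diagonal'' restriction obtained by freezing all variables $x_j$ with $j\neq i$ to fresh distinct constant letters $a_j$, namely $g_i(x) = f(a_1,\ldots,a_{i-1},x,a_{i+1},\ldots,a_k)$. By Lemma~\ref{l:proj:n} this is a unary RCP function, so by Theorem~\ref{Carac_par_Poly RCP} it has the form $g_i(x) = u_0 x u_1 x \cdots x u_m$ for some words $u_j$. Because the constants $a_j$ are letters not occurring in the hidden parameter words of $f$, the positions in $g_i(x)$ occupied by these constant letters can be located unambiguously, and ``reading off'' the interleaving pattern of $x$ with the parameter words gives us, for each $i$, the set of positions where variable $x_i$ appears in the output word of $f$, together with the intervening parameter words $w_j$. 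The number $m$ of occurrences of $x$ here must equal $p_{f,i}$ by Lemma~\ref{l:lengthK}. Doing this for each $i$ and noting that the fresh letters $a_j$ for $j\neq i$ mark the slots where the other variables go, one assembles a single candidate term $t(x_1,\ldots,x_k) = w_0 x_{i_1}^{p_1} w_1 \cdots x_{i_n}^{p_n} w_n$.

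Finally I would verify that $f = t$ as functions on $(\Sigma^*)^k$. The clean way is: fix arbitrary $v_1,\ldots,v_k\in\Sigma^*$; choose a letter $c$ not occurring in any $v_i$, nor in any $w_j$, nor among the $a_i$'s; consider for each $i$ a morphism $\sigma$ that sends $a_i\mapsto$ a padded encoding of $v_i$ (using $c$ as a separator), $a_j\mapsto a_j$ for $j\neq i$, and fixes everything else; apply RCP with respect to the kernels of suitable such morphisms to transport the already-known value $f(a_1,\ldots,a_k)$ (or the values $g_i$) to $f(v_1,\ldots,v_k)$, and compare with $t(v_1,\ldots,v_k)$ computed the same way. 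Concretely, since $f$ preserves the restricted congruence induced by a morphism that agrees with a substitution $a_i\mapsto v_i$ up to the fresh separators, and since the separators let us unambiguously cut the output, the equality $f(v_1,\ldots,v_k)=t(v_1,\ldots,v_k)$ follows. The main obstacle I expect is bookkeeping: making the fresh-letter encoding precise enough that (a) one really can locate the variable slots in $g_i(x)$ and read off a coherent parameter sequence, and (b) the coherence of the $k$ separate unary reconstructions — i.e., that the parameter words $w_j$ obtained from the $g_i$ for different $i$ glue into one consistent term — is genuinely forced rather than merely plausible. Using a single tuple of fresh letters $a_1,\ldots,a_k$ throughout (rather than re-choosing them for each $i$), and reconstructing the whole output word of $f(a_1,\ldots,a_k)$ at once while tracking which letter sits in which position, sidesteps most of this: the positions occupied by $a_i$ are exactly the $x_i$-slots, the remaining maximal blocks are the $w_j$, and RCP then propagates this single identity to all inputs.
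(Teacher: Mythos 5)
Your route is genuinely different from the paper's. The paper argues by induction on the arity $k$: freezing $\vec{x}=(x_2,\ldots,x_k)$, it applies the unary Theorem~\ref{Carac_par_Poly RCP} to write $f(x_1,\vec{x})=u_0(\vec{x})\,x_1\,u_1(\vec{x})\cdots x_1\,u_m(\vec{x})$ with $m=p_{f,1}$, then uses a single fresh letter $a$ (substituted for $x_1$, chosen outside a finite set of relevant letters) to show that the coefficient maps $\vec{x}\mapsto u_j(\vec{x})$ are themselves RCP, and concludes by the induction hypothesis. You instead evaluate $f$ once at a tuple of pairwise distinct fresh letters $(a_1,\ldots,a_k)$, read a candidate term $t$ off that single word, and propagate the identity $f=t$ to arbitrary arguments by substitution morphisms; this avoids the induction on arity and, once completed, is arguably shorter. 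But the decisive last step is not correct as you describe it.

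Two concrete gaps. First, your freshness condition is circular (you cannot avoid ``the letters of $w_0,\ldots,w_n$'' before knowing $f$ has that form); choose instead the $a_i$ outside the letters of $f(\varepsilon,\ldots,\varepsilon)$ and of the target arguments $v_1,\ldots,v_k$. Lemma~\ref{CPvslengthK}(2) (or Lemma~\ref{l:lengthK}) then gives $|f(\vec{v})|_{a_i}=|f(\varepsilon,\ldots,\varepsilon)|_{a_i}=0$ and $|f(\vec{a})|_{a_i}=p_{f,i}$, which is what makes the $a_i$ usable as markers. Second, the morphism you propose fails the RCP hypothesis: if $\sigma$ sends $a_i$ to a $c$-padded encoding of $v_i$ and fixes the letters of $v_i$, then $\sigma(a_i)\neq\sigma(v_i)$, so congruence preservation says nothing about $f(\vec{a})$ versus $f(\vec{v})$. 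The correct choice is the plain substitution $\sigma(a_i)=v_i$, identity elsewhere; RCP yields $\sigma(f(\vec{v}))=\sigma(f(\vec{a}))=t(\vec{v})$. Since $\sigma$ is not injective this determines only $\sigma(f(\vec{v}))$; the missing observation, nowhere in your sketch, is that $f(\vec{v})$ contains no occurrence of any $a_i$ (the letter-count fact above), hence $\sigma(f(\vec{v}))=f(\vec{v})$ and $f(\vec{v})=t(\vec{v})$ follows --- no separators needed. Finally, because the markers must avoid the letters of $\vec{v}$, they depend on $\vec{v}$; to obtain one term valid for all inputs you must still check that the read-off pattern does not depend on the marker tuple (transport between two disjoint fresh tuples by the same kind of substitution), the coherence point you flagged but left open.
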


\begin{proof}
We argue by induction on the arity $k$.

{\it Base case $k=1$.} This is Theorem~\ref{Carac_par_Poly RCP}.

{\it Induction.} Let $k\geq2$.
We assume the theorem is true for arity $k-1$ and we prove it for arity $k$.
Fix some $\vec{x}=(x_2,\ldots,x_k) \in (\Sigma^*)^{k-1}$.
The unary function $f^{\vec{x}}_1 : \Sigma^* \to \Sigma^*$,
obtained from $f$ by freezing all arguments but the first one, is RCP.
Also, with the notations of Lemma~\ref{l:lengthK}, we have, for all $x_1\in\Sigma^*$,
\begin{eqnarray}\label{eq:length fvecx}
|f^{\vec{x}}_1(x_1)| = |f(x_1,x_2,\ldots,x_k)|
= m |x_1| + n
\end{eqnarray}
where $m=  p_{f,1}$ and $n = p_{f,2} |x_2|+\cdots+ p_{f,k} |x_k| +e_f$.

Since the unary function $f_1^{\vec{x}}$ is RCP, 
applying Theorem~\ref{Carac_par_Poly RCP}, Lemma~\ref{l:length} 
and equation \eqref{eq:length fvecx}, 
we see that there exists $m+1$ words $u_0(\vec{x}),\ldots, u_m(\vec{x})$ 
(which depend only on $\vec{x}$)
such that, for all $x_1$ and $\vec{x}$,
\begin{eqnarray}\label{eq:fxt}
f^{\vec{x}}_1(x_1) \ =\ f(x_1,x_2,\ldots,x_k) &=& 
u_0(\vec{x})\, x_1\, u_1(\vec{x})\, x_1 \cdots u_{m-1}(\vec{x})\, x_1\, u_m(\vec{x})
\end{eqnarray}
{\bf Claim.} {\it The functions 
$\vec{x}\mapsto u_0(\vec{x}),  \ldots,  \vec{x}\mapsto u_m(\vec{x})$ are RCP.}
\smallskip\\
{\it Proof of Claim.}
Let $\varphi:\Sigma^*\to\Sigma^*$ be a morphism and 
$\vec{y}=(y_2,\ldots,y_k), \vec{z}=(z_2,\ldots,z_k)$ in $(\Sigma^*)^{k-1}$ be such that
$\varphi(y_2)=\varphi(z_2)$, \ldots, $\varphi(y_k)=\varphi(z_k)$.
We have to prove that $\varphi(u_i(\vec{y}))=\varphi(u_i(\vec{z}))$ for $i=0,\ldots,m$.

Let $\Gamma$ be a finite subset of $\Sigma$ such that 
$y_2$,\ldots, $y_k$, $z_2$,\ldots, $z_k$, 
$u_0(\vec{y})$,\ldots, $u_m(\vec{y})$, $u_0(\vec{z})$,\ldots, $u_m(\vec{z})$ 
and their images by $\varphi$
are all in $\Gamma^*$.
Let $a\in\Sigma\setminus\Gamma$ 
(this is where we use the hypothesis that $\Sigma$ is infinite).
Define a morphism $\psi: \Sigma^*\to\Sigma^*$ as follows:
$\psi(c)=\varphi(c)$ for all $c\in\Gamma$ and $\psi(c)=a$ for all $c\in\Sigma\setminus\Gamma$.
In particular, we have 
$\psi(u_i(\vec{y}))=\varphi(u_i(\vec{y}))$ and $\psi(u_i(\vec{z}))=\varphi(u_i(\vec{z}))$
for $i=0,\ldots,m$,
and these words contain no occurrence of $a$.
Thus, applying the morphism $\psi$ to \eqref{eq:fxt} with 
$x_1=a$ and $\vec{x}=\vec{y},\vec{z}$, we get
\begin{eqnarray}\notag
\psi(f(a,\vec{y})) &=& \psi(u_0(\vec{y}))\, \psi(a)\, \psi(u_1(\vec{y}))\, \psi(a) \cdots 
\psi(u_{m-1}(\vec{y}))\, \psi(a)\, \psi(u_m(\vec{y}))
\\\label{eq:psi fay}
&=& \varphi(u_0(\vec{y}))\, a\, \varphi(u_1(\vec{y}))\, a \cdots 
\varphi(u_{m-1}(\vec{y}))\, a\, \varphi(u_m(\vec{y}))
\\\label{eq;psi faz}
\text{Similarly,\ }
\psi(f(a,\vec{z})) &=& \varphi(u_0(\vec{z}))\, a\, \varphi(u_1(\vec{z}))\, a \cdots 
\varphi(u_{m-1}(\vec{z}))\, a\, \varphi(u_m(\vec{z}))
\end{eqnarray}
As $\psi(y_2)=\varphi(y_2)=\varphi(z_2)=\psi(z_2)$,\ldots,
$\psi(y_k)=\varphi(y_k)=\varphi(z_k)=\psi(z_k)$
and $f$ is RCP, we have $\psi(f(a,\vec{y}))=\psi(f(a,\vec{z})$.
Applying equations \eqref{eq:psi fay} and \eqref{eq;psi faz}, we get
\begin{multline}\label{eq: on separe avec a}
\varphi(u_0(\vec{y}))\, a\, \varphi(u_1(\vec{y}))\, a \cdots 
\varphi(u_{m-1}(\vec{y}))\, a\, \varphi(u_m(\vec{y}))
\\
=\  \varphi(u_0(\vec{z}))\, a\, \varphi(u_1(\vec{z}))\, a \cdots 
\varphi(u_{m-1}(\vec{z}))\, a\, \varphi(u_m(\vec{z}))
\end{multline}
Since $a$ does not occur in the $\varphi(u_i(\vec{y}))$'s and the $\varphi(u_i(\vec{z}))$'s, 
for $i=0,\ldots,m$, equality \eqref{eq: on separe avec a} yields
$\varphi(u_0(\vec{y}))=\varphi(u_0(\vec{z}))$, \ldots, 
$\varphi(u_m(\vec{y}))=\varphi(u_m(\vec{z}))$.
This proves the Claim.

\medskip
Finally, applying the induction hypothesis, 
the RCP $(k-1)$-ary functions $u_0(\vec{x}),\ldots, u_m(\vec{x})$
are represented by terms in $x_2,\ldots,x_k$.
Using equation~\eqref{eq:fxt}, we then get a term 
which represents the $k$ ary function $f(x_1,x_2,\ldots,x_k)$.
\end{proof}

\section{Conclusion}
We proved that, when $\Sigma$ has at least three letters, the free monoid $\Sigma^*$ is affine complete, i.e., a function is CP if and only if  it is ``polynomial". An essential tool in the proof was to use restricted congruence preserving functions, which happen to coincide with CP functions in case $\Sigma$ has at least three letters.

If $\Sigma$ has just one letter,  RCP functions are a strict subset of CP functions
because the monoid $\Sigma^*$ then reduces to the monoid $\langle\N,+\rangle$ which has non restricted congruences non equivalent to restricted ones.
We proved in \cite{cggIJNT} that  there are on $\langle\N,+\rangle$ non polynomial CP functions, e.g.,
$f(x)\ =\ \lfloor e^{1/a} a^x   x!\rfloor$ for $a\in\N\setminus\{0,1\}$.

An open problem is to characterize CP functions when $\Sigma$ has exactly two letters: 
is $\{a,b\}^*$  affine complete or are there non polynomial CP functions, i.e., 
do Theorems~\ref{Carac_par_Poly} and \ref{t:main} extend to  binary alphabets?

\bibliographystyle{plain}

\end{document}